\documentclass[12pt]{article}
\usepackage{amsmath,amscd,amsthm,amsfonts,amssymb}

	\usepackage{amsmath,amscd,amsthm,amsfonts,amssymb} 
	\usepackage{pgfpages}

	\usepackage{adjustbox}
	\usepackage{graphicx}
	\usepackage{tikz}
	\usepackage{caption}
	\usepackage{enumerate}
	\begin{document}
	\newtheorem{fact}{Fact}
	\newtheorem*{ass*}{Main assumption}
	\newtheorem{thm}{Theorem}
	\newtheorem{cor}{Corollary}
	\newtheorem{lem}{Lemma}
	\newtheorem{slem}{Sublemma}
	\newtheorem{prop}{Proposition}
	\newtheorem{defn}{Definition}
	\newtheorem{conj}{Conjecture}
	\newtheorem*{ques*}{Question}
	\newtheorem{claim}{Claim}
	\newcounter{constant} 
	\newtheorem{rmk}{Remark}
	\newcommand{\newconstant}[1]{\refstepcounter{constant}\label{#1}}
	\newcommand{\useconstant}[1]{\Delta_{\ref{#1}}}
	\title{Sequences of surfaces in $4$-manifolds}
	\date{}
\author{Marina Ville}

\maketitle
\begin{abstract} Let $(\Sigma_n)$ be a sequence of surfaces immersed in a 
$4$-manifold $M$ which converges to a branched surface $\Sigma_0$ .\\
We denote by $k^T_p$ (resp. $k^N_p$) the amount of curvature of the tangent bundles
$T\Sigma_n$ (resp. normal bundles $N\Sigma_n$) which concentrates around a branch
point $p$ of $\Sigma_0$ when $n$ goes to infinity. Alternatively $k^T\pm k^N$ measures how much the twistor degrees drop when we go from $\Sigma_n$ to $\Sigma_0$. For complex algebraic curves, $k^T+k^N=0$..\\
In some instances - 1) if $\Sigma_0$ is made up of at most $3$ branched disks or 2) if $\Sigma_0$ is area minimizing or 3) if the $\Sigma_n$'s are minimal - we show that
$-k^T\geq |k^N|$  and we investigate the equality case.\\
When the second fundamental forms of the $\Sigma_n$'s have a common
$L^2$ bound, we relate $k^T$ and $k^N$ to the bubbling-off of a current $C$ in the 
Grassmannian $G_2^+(M)$. If the $\Sigma_n$'s are minimal, $C$ is a complex curve.
\end{abstract}
{\it Keywords}: surfaces in $4$-manifolds, branch points, minimal surfaces, twistors, braids, knots, quasipositive links 
\section{Introduction - Motivation}
\subsection{Statement of the problem}
Consider an algebraic function $F(z_1,z_2)$ defined in a small ball around $(0,0)$ in $\mathbb{C}^2$. If $\epsilon$ is small, assume that the curves $F^{-1}(\epsilon)$ are smooth and converge to a singular curve $F^{-1}(0)$, made of branched disks. We derive the first Betti number of the $F^{-1}(\epsilon)$'s by computing  the
{\it Milnor number} ([Mi], see also [Ru 1])  on the Puiseux coefficients on 
$F^{-1}(0)$.  Very roughly speaking: going from the smooth curves to the singular one,  we lose in topology but we gain in 
singularity  and we know exactly how much topology we have lost just by looking at the singular curve. 
\begin{ques*}What remains of this nice picture if $(\Sigma_n)$ is a
sequence
of $2$-surfaces embedded in a $4$-manifold which
degenerates to a branched surface  $\Sigma_0$ ? Can we define 
a Milnor number in this context?
\end{ques*}
We recall branched surfaces in \ref{definition of branched immersions}  and we state what convergence we require in \ref{paragraphe avec l'assumption}.\\
\\
Complex curves in K\"ahler surfaces are a special case of
minimal surfaces (Wirtinger's theorem) so this question makes particular sense if the $\Sigma_n$'s are minimal surfaces.\\
\\
The question of generalizing the Milnor number to the non-complex 
algebraic case has been around for some time. We would like to mention the work of R\'emi Langevin (see [La] for example) and of Lee Rudolph:  in 
particular [Ru 2] which  contains a 
construction closely related to ours.
\subsection{Some results}
We consider a Riemannian $4$-manifold $M$ and a sequence of immersed/embedded surfaces $\Sigma_n$ in $M$ which converges to a surface $\Sigma_0$ with a branch points $p$.  
 We look at the amount of curvature of the tangent bundles $T\Sigma_n$ and normal bundles $N\Sigma_n$'s which bubbles off when the $\Sigma_n$'s converge to $\Sigma_0$. \\
 $\bullet$.  We let
 \begin{equation}
 \Sigma_n^\epsilon=B(p,\epsilon)\cap\Sigma_n
 \end{equation}
 where $B(p,\epsilon)$ is the ball centered at $p$ of radius $\epsilon$ w.r.t. the distance defined by the Euclidean metric defined by extending the metric on $T_pM$ in a small neighbourhood of $p$.\\ 
 $\bullet$ We denote by $K_n^T$ and $K_n^N$ the  curvatures  of $T\Sigma_n$ and $N\Sigma_n$, and let
\begin{equation}
k^T_p=\frac{1}{2\pi}\lim_{\epsilon\longrightarrow 0}\lim_{n\longrightarrow \infty}
\int_{\Sigma_n^\epsilon}K^T_n
\end{equation}
\begin{equation}
k^N_p=\frac{1}{2\pi}\lim_{\epsilon\longrightarrow 0}\lim_{n\longrightarrow \infty}
\int_{\Sigma_n^\epsilon}K^N_n
\end{equation}
These definitions are local and they do not depend on the metric.
NB. If we change the orientation on $M$, $k^N$ is changed in  $-k^N$.\\
We call $k^T$ and $k^N$ the {\it tangent and normal fallouts} and give topological interpretations of these numbers.
\begin{prop}\label{formule pour le tangent fallout}
	If there are $m$ branched disks $\mathcal D_i$, $i=1,...,m$ of branching orders $N_i-1$,
	\begin{equation}
	\label{gauss-bonnet2}
	k^T=2-\sum_{i=1}^m (N_i+1)-2\lim_{\epsilon\longrightarrow 0}\lim_{n\longrightarrow \infty}
	g(\Sigma_n^\epsilon)
	\end{equation}
\end{prop}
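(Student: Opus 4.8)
The plan is to derive \eqref{gauss-bonnet2} from the Gauss--Bonnet theorem applied to the smooth truncated surfaces $\Sigma_n^\epsilon$, passing first to the limit $n\to\infty$ and then $\epsilon\to0$. The key preliminary remark is that $K^T_n$ is exactly the Gaussian curvature $2$-form of the metric induced on $\Sigma_n$, since the curvature of $T\Sigma_n$ endowed with its Levi--Civita connection is the intrinsic curvature. Choosing $\epsilon$ so that $\partial B(p,\epsilon)$ meets $\Sigma_n$ transversally (possible for almost every $\epsilon$ by Sard), $\Sigma_n^\epsilon$ is a compact surface with boundary and Gauss--Bonnet reads
\begin{equation}
\frac{1}{2\pi}\int_{\Sigma_n^\epsilon}K^T_n=\chi(\Sigma_n^\epsilon)-\frac{1}{2\pi}\int_{\partial\Sigma_n^\epsilon}k_g\,ds,
\end{equation}
with $k_g$ the geodesic curvature of $\partial\Sigma_n^\epsilon$ measured inside $\Sigma_n^\epsilon$. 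It then suffices to compute the iterated limits of $\chi(\Sigma_n^\epsilon)$ and of the boundary integral.

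For the Euler characteristic I would show that, for $n$ large and $\epsilon$ small, $\Sigma_n^\epsilon$ is connected with exactly $m$ boundary components. Connectedness reflects the fact that the $m$ branched disks all pass through the single point $p$, so their smoothings $\Sigma_n$ are glued into one piece near $p$, exactly as a Milnor fibre is connected. The count of boundary components follows from the convergence required in \ref{paragraphe avec l'assumption}: away from $p$ the convergence $\Sigma_n\to\Sigma_0$ is smooth, and the preimage of $B(p,\epsilon)$ under the branched parametrization of each $\mathcal D_i$ is a topological disk, so each $\mathcal D_i$ meets $\partial B(p,\epsilon)$ in a single closed curve. Hence $\chi(\Sigma_n^\epsilon)=2-2g(\Sigma_n^\epsilon)-m$ and
\begin{equation}
\lim_{\epsilon\to0}\lim_{n\to\infty}\chi(\Sigma_n^\epsilon)=2-m-2\,g_\infty,\qquad g_\infty:=\lim_{\epsilon\to0}\lim_{n\to\infty}g(\Sigma_n^\epsilon).
\end{equation}

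The heart of the matter, and the step I expect to be the main obstacle, is the boundary term. For fixed $\epsilon$ the curve $\partial\Sigma_n^\epsilon$ sits near $\partial B(p,\epsilon)$, away from the branch point, so smooth convergence gives $\int_{\partial\Sigma_n^\epsilon}k_g\,ds\to\int_{\partial\Sigma_0^\epsilon}k_g\,ds$ as $n\to\infty$. To handle $\epsilon\to0$ I would exploit that $\int k_g\,ds$ is scale invariant and blow up $\Sigma_0^\epsilon$ by the factor $1/\epsilon$ about $p$: by the branched-immersion structure recalled in \ref{definition of branched immersions}, the rescaled $i$-th piece converges to its tangent cone, namely the plane $T_p\mathcal D_i$ covered $N_i$ times, whose link is the unit circle traversed $N_i$ times. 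That limiting curve has total geodesic curvature $2\pi N_i$, so the contribution $\frac{1}{2\pi}\int_{\partial_i}k_g\,ds$ of the boundary arc $\partial_i$ coming from $\mathcal D_i$ tends to $N_i$. The delicate point is to make this blow-up argument rigorous and to verify that the higher-order (off-cone) terms of the parametrization do not contribute to the scale-invariant limit; summing over $i$ should then give
\begin{equation}
\lim_{\epsilon\to0}\lim_{n\to\infty}\frac{1}{2\pi}\int_{\partial\Sigma_n^\epsilon}k_g\,ds=\sum_{i=1}^m N_i.
\end{equation}

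Combining the three limits yields
\begin{equation}
k^T=(2-m-2\,g_\infty)-\sum_{i=1}^m N_i=2-\sum_{i=1}^m(N_i+1)-2\,g_\infty,
\end{equation}
which is \eqref{gauss-bonnet2}. As a consistency check, a single flat disk ($m=1$, $N_1=1$, $g_\infty=0$) has boundary a round circle with $\frac{1}{2\pi}\int k_g\,ds=1$ and $\chi=1$, giving $k^T=0$, as it should.
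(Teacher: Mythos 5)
Your proof is correct and follows essentially the same route as the paper's: Gauss--Bonnet with boundary applied to $\Sigma_n^\epsilon$, smooth convergence of the boundary integral as $n\to\infty$, and the observation that $\partial\Sigma_0^\epsilon$ is asymptotic to $m$ circles of multiplicities $N_i$, so the total geodesic curvature tends to $2\pi\sum_{i=1}^m N_i$. The extra details you supply --- the count $\chi(\Sigma_n^\epsilon)=2-2g(\Sigma_n^\epsilon)-m$ via connectedness and the $m$ boundary components, and the scale-invariant blow-up argument for the boundary term --- are left implicit in the paper's terse proof, but they fill in, rather than replace, the same argument.
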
\begin{prop}\label{formule pour le normal fallout}
	Assume that the $\Sigma_n$'s only have transverse double points in a neighbourhood of $p$. For each branched disk ${\mathcal D}_i$, $i=1,...,m$, making up $\Sigma_0$, we pick a vector non zero vector $X_i$  transverse to the plane tangent to  ${\mathcal D}_i$ at $p$. For $\epsilon$ small enough, the orthogonal projections of the $X_i$'s to $\mathbb{S}^3$ give us a framing $\hat{X}$ of $\Gamma^\epsilon=\Sigma_0\cap \mathbb{S}(0,\epsilon)$. We let $sl_X(\Gamma^\epsilon)$ be the self-linking number of $\Gamma^\epsilon$ w.r.t. $\hat{X}$; then for $\epsilon$ small enough, 
	\begin{equation}\label{kn}
	k^N=sl_X(\Gamma^\epsilon)-2\lim_{\epsilon\longrightarrow 0}\lim_{n\longrightarrow \infty}(\#\mbox{double points of\ }\Sigma_n^\epsilon)
	\end{equation}
	In the case of a single branch point of branching order $N-1$, $\Gamma^\epsilon$ is naturally presented as an $N$-braid and $sl_X(\Gamma^\epsilon)$ is equal to the algebraic length, or writhe of this braid.
	\end{prop}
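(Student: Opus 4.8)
The plan is to recognize the normalized normal-curvature integral $\frac{1}{2\pi}\int_{\Sigma_n^\epsilon}K^N_n$ as a \emph{relative normal Euler number} and then to evaluate that Euler number topologically, separating a boundary contribution (the self-linking) from an interior contribution (the double points). Throughout I fix $\epsilon$ small and $n$ large, so that $\Sigma_n^\epsilon$ is a compact immersed surface with boundary $\Gamma_n^\epsilon=\Sigma_n\cap\mathbb{S}(0,\epsilon)$ which is $C^1$-close to $\Gamma^\epsilon$, and the framing $\hat X$ transports to a framing of $N\Sigma_n^\epsilon$ along $\Gamma_n^\epsilon$. Using the normal connection induced by the Levi--Civita connection of $M$, a relative Gauss--Bonnet (Chern--Weil with boundary) identity reads
\[
\frac{1}{2\pi}\int_{\Sigma_n^\epsilon}K^N_n=e\big(N\Sigma_n^\epsilon,\hat X\big)-\frac{1}{2\pi}\oint_{\Gamma_n^\epsilon}\omega_{\hat X},
\]
where $e(N\Sigma_n^\epsilon,\hat X)\in\mathbb{Z}$ is the obstruction to extending $\hat X$ to a nowhere-zero normal section, and $\omega_{\hat X}$ is the rotation of $\hat X$ relative to the parallel normal frame along the boundary. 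Since the metric near $p$ is Euclidean and the $X_i$ are projections of fixed vectors, after rescaling $B(p,\epsilon)$ to unit size the framing becomes asymptotically parallel and this boundary term is negligible in the double limit, so that $\frac{1}{2\pi}\int_{\Sigma_n^\epsilon}K^N_n$ and $e(N\Sigma_n^\epsilon,\hat X)$ agree in the limit.

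Next I would evaluate the relative Euler number geometrically. Choosing a generic normal section $V$ extending $\hat X$, I push $\Sigma_n^\epsilon$ off itself along $V$ to an oriented surface $\Sigma_n'\subset B(p,\epsilon)$, whose boundary $\Gamma_n'$ is the $\hat X$-pushoff of $\Gamma_n^\epsilon$ and is disjoint from it. The algebraic intersection number $\Sigma_n^\epsilon\cdot\Sigma_n'$ may be computed in two ways. First, the intersection number of two surfaces in the ball with disjoint boundaries on the sphere equals the linking number of those boundaries, so $\Sigma_n^\epsilon\cdot\Sigma_n'=\mathrm{lk}(\Gamma_n^\epsilon,\Gamma_n')=sl_X(\Gamma_n^\epsilon)$. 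Second, these intersections split into the zeros of $V$, which count $e(N\Sigma_n^\epsilon,\hat X)$, together with the intersections forced near each transverse double point $f(a)=f(b)$, where the pushed-off sheet through $a$ meets the original sheet through $b$ and vice versa, contributing $\pm2$ by the standard local model. Equating the two counts gives
\[
e\big(N\Sigma_n^\epsilon,\hat X\big)=sl_X(\Gamma_n^\epsilon)-2\,(\#\text{ double points of }\Sigma_n^\epsilon).
\]

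Letting $n\to\infty$ and then $\epsilon\to0$, the left-hand side of the first display converges to $k^N$ by definition and, by the two displays, to $sl_X(\Gamma^\epsilon)-2\lim_{\epsilon\to0}\lim_{n\to\infty}(\#\text{ double points})$; here $\Gamma_n^\epsilon\to\Gamma^\epsilon$ with convergent framing forces $sl_X(\Gamma_n^\epsilon)\to sl_X(\Gamma^\epsilon)$, a quantity independent of $\epsilon$ for $\epsilon$ small. This is \eqref{kn}. For the single branch point of order $N-1$, the Puiseux parametrization of the branched disk exhibits $\Gamma^\epsilon$ as a closed $N$-braid about the branch locus; the projected framing $\hat X$ is compatible with the braid, so computing $\mathrm{lk}(\Gamma^\epsilon,\Gamma^\epsilon_{\hat X})$ reduces to the signed crossing count of the braid diagram, i.e. its algebraic length (writhe), identifying $sl_X(\Gamma^\epsilon)$ with the writhe.

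The main obstacle I anticipate is twofold and concentrated in the first step. One must rigorously justify that the boundary transgression $\oint_{\Gamma_n^\epsilon}\omega_{\hat X}$ truly drops out in the double limit, so that the analytic curvature integral equals the topological Euler number; the concentration of normal curvature at $p$ does not interfere because it sits at the center while the transgression is measured on the boundary link, but making this quantitative (through the rescaling argument) requires care. Simultaneously, one must fix the orientation conventions so that the double-point contribution in the second step is exactly $-2$ rather than $+2$ --- a sign that has to be consistent with the remark that $k^N$ changes to $-k^N$ when the orientation of $M$ is reversed.
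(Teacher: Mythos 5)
Your proposal is correct and follows essentially the same route as the paper: the paper's Lemma \ref{lemme sur stokes} is exactly your relative Gauss--Bonnet identity, the zero count of the extended normal section is your relative Euler number, the push-off argument identifying intersections with $\mathrm{lk}(\Gamma^\epsilon,\hat\Gamma^\epsilon)$ minus twice the signed double points is the paper's equation for $N(X_n,\Sigma_n^\epsilon)$, and the vanishing of the boundary transgression in the double limit is argued the same way (boundary circles limiting to great circles bounding flat disks with asymptotically parallel framings). The only differences are notational (the sign convention on the connection form $\omega$, which is immaterial since that term vanishes in the limit).
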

In the complex algebraic case, $K^T+K^N=0$, thus
$
k^N+k^T=0
$ and $k^N=\mu+r-1$ where $\mu$ is the Milnor number of the singularity and $r$ is the number of boundary components of $\Sigma_n\cap \mathbb{S}(p,\epsilon)$.\\
If the $\Sigma_n$'s are symplectic or superminimal surfaces, we also have $|k^N|=-k^T$ (\S \ref{paragraph on equality} below).
But in the general case, we only have partial information.
\begin{thm}\label{proposition: inegalite} Assume that  {\bf one of the following is true}
\begin{enumerate}[1)]
	\item 
	The $\Sigma_n$'s are minimal surfaces
	\item 
	$\partial \Sigma_0$ can be presented as a braid
	\item
	$\Sigma_0$ is area minimzing (a special case of 2)
\end{enumerate}
\begin{equation}\label{inegalite principale}
\mbox{Then} \ \ \ \ \  \ \ \ \ \  \ \ \ \ \ |k^N|\leq -k^T
\end{equation}

\end{thm}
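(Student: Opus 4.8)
The plan is to turn the inequality into a self-linking bound and then feed it into a Bennequin-type estimate. Write $g=\lim_{\epsilon}\lim_{n}g(\Sigma_n^\epsilon)$ and $d=\lim_{\epsilon}\lim_{n}(\#\,\mbox{double points of }\Sigma_n^\epsilon)$. Combining Proposition~\ref{formule pour le tangent fallout} and Proposition~\ref{formule pour le normal fallout}, the two quantities to be compared are
\begin{equation}
-k^T=\sum_{i=1}^m(N_i+1)-2+2g,\qquad k^N=sl_X(\Gamma^\epsilon)-2d.
\end{equation}
So $k^N\leq -k^T$ is equivalent to $sl_X(\Gamma^\epsilon)-2d\leq \sum_i(N_i+1)-2+2g$. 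I would first record the framing comparison: presenting $\Gamma^\epsilon$ as the closure of an $N$-braid with $N=\sum_iN_i$ (as in Proposition~\ref{formule pour le normal fallout}), the self-linking number with respect to the transverse framing $\hat X$ exceeds the self-linking number $sl$ with respect to the contact (Seifert) framing by exactly the braid index, i.e. $sl_X(\Gamma^\epsilon)=sl(\Gamma^\epsilon)+\sum_iN_i$. After substitution the target becomes the clean statement $sl(\Gamma^\epsilon)\leq 2g+2d+m-2$.

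The right-hand side is an Euler characteristic. For $n$ large, $\Sigma_n^\epsilon$ is a properly immersed surface in the ball $B(p,\epsilon)$ whose boundary is (isotopic to) the transverse link $\Gamma^\epsilon$, with genus $g$, with $m$ boundary components (one per branched disk, so $r=m$), and with $d$ transverse double points. Resolving each double point by the oriented cut-and-paste removes two disks and glues in a tube, lowering $\chi$ by $2$; this yields a smoothly embedded surface $\tilde S\subset B(p,\epsilon)$ with $\partial\tilde S=\Gamma^\epsilon$ and $-\chi(\tilde S)=2g+2d+m-2$. Hence the inequality to be proved is precisely the slice-Bennequin inequality $sl(\Gamma^\epsilon)\leq -\chi(\tilde S)$ for the transverse link $\Gamma^\epsilon$ and the slice surface $\tilde S$ (this is where the construction touches Rudolph's). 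Under hypotheses 2) and 3), $\Gamma^\epsilon$ is by assumption a braid (area-minimizing being the stated special case), hence a transverse link, and slice-Bennequin applies at once; substituting $-\chi(\tilde S)$ and $sl_X=sl+\sum_iN_i$ back into the two displayed formulas gives exactly $k^N\leq -k^T$.

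The delicate case is the minimal one, 1), where the main work is to certify that $\Gamma^\epsilon$ is genuinely a transverse link for the relevant contact structure. Here I would invoke the regularity of minimal surfaces at a branch point: after a $C^1$ change of coordinates the surface is asymptotic to a holomorphic curve for some complex structure $J$ on $T_pM$ compatible with the Euclidean metric, the tangent cone being a union of $J$-lines. Consequently $\Gamma^\epsilon$ is the link of a (pseudo)holomorphic curve singularity, transverse to the $J$-contact structure on $\mathbb S(p,\epsilon)$, which is contactomorphic to the standard tight one, and the admissible slice surface is $\Sigma_n^\epsilon$ itself. I expect this transversality/regularity step, together with the bookkeeping that the limit surfaces $\Sigma_n^\epsilon$ really furnish slice surfaces for one fixed transverse representative of $\Gamma^\epsilon$, to be the main obstacle; it is also what makes the minimal case morally a positivity (holomorphic Gauss-lift) statement rather than a purely combinatorial one.

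Finally, to upgrade $k^N\leq -k^T$ to $|k^N|\leq -k^T$ I would reverse the orientation of $M$. This fixes $k^T$, sends $k^N$ to $-k^N$ (as recorded in the NB following the definition of $k^N$), and preserves each hypothesis: minimality and area-minimality are orientation-insensitive, and the mirror of a braid is again a braid. Applying the one-sided bound to the reversed manifold yields $-k^N\leq -k^T$, and since then $-k^T\geq 0$, the two bounds combine to give $|k^N|\leq -k^T$, which is \eqref{inegalite principale}.
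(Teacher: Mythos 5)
Your treatment of cases 2) and 3) is essentially the paper's own argument: present $\Gamma^\epsilon$ as a braid, identify $k^N$ with its writhe (your identity $sl_X(\Gamma^\epsilon)=sl(\Gamma^\epsilon)+\sum_i N_i$ is just the translation between writhe and Bennequin self-linking), bound $\chi(\Sigma_n^\epsilon)$ by $\chi_s$, invoke Rudolph's slice Bennequin inequality (Theorem \ref{slice bennequin}), and then double the one-sided bound by reversing the orientation of $M$, exactly as in the proof of Theorem \ref{quand le bord est une tresse}. Your explicit oriented resolution of the double points is a fine way to reconcile the $-2d$ term of Proposition \ref{formule pour le normal fallout} with the requirement that the slice surface be embedded; the only looseness is that the framing identity silently assumes the braid axis is positively transverse to all the tangent planes $P_i$, which is precisely what the hypothesis on the plane $Q$ in Theorem \ref{quand le bord est une tresse} guarantees.

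The genuine gap is in case 1). You reduce the minimal case to the braid case by claiming that a minimal branch point is $C^1$-asymptotic to a holomorphic curve, so that the tangent cone is a union of $J$-complex lines and $\Gamma^\epsilon$ is a transverse link. That regularity statement is true for superminimal surfaces ([Vi2]) and for area-minimizing ones (Chang plus Morgan, which is exactly how the paper handles case 3) and item (iii) of Theorem \ref{quand le bord est une tresse}), but it is false for general minimal surfaces: nothing forces the tangent planes of the several minimal disks making up $\Sigma_0$ to be complex for a common $J$, or even to admit a common positively transverse plane. Concretely, $\Sigma_0$ may consist of two minimal branched disks whose tangent planes at $p$ coincide set-wise but carry opposite orientations; then, as the remark following Theorem \ref{quand le bord est une tresse} points out, $\partial\Sigma_0$ cannot be presented as a closed braid at all, and your slice-Bennequin route collapses. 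The paper's proof of case 1) is entirely different and purely local-analytic: by the formulas of \S\ref{curvature formulae}, for a minimal surface $\Omega^T=-\|B(e_1,e_2)\|^2-\|B(e_1,e_1)\|^2+\langle R^M(e_1,e_2)e_3,e_4\rangle$ and $\Omega^N=2B(e_1,e_1)\wedge B(e_1,e_2)+\langle R^M(e_1,e_2)e_3,e_4\rangle$, so the arithmetic--geometric mean inequality gives $|\Omega^N|\leq-\Omega^T$ pointwise up to ambient-curvature terms; these terms contribute nothing in the limit since the areas of the $\Sigma_n^\epsilon$ shrink, and integration yields $|k^N|\leq -k^T$ with no knot theory at all. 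You should either adopt that curvature argument for case 1), or restrict your reduction to the subcases where a braid presentation actually exists (e.g.\ area minimizing, superminimal, or $m\leq 3$).
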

\subsection{Sketch of the paper}
In \S \ref{Definitions and main properties}) we give definitions and derive the topological descriptions (Propositions \ref{formule pour le normal fallout} and \ref{formule pour le normal fallout}) of $k^T$ and $k^N$.
1) of Theorem \ref{proposition: inegalite} 1.  follows immediately from the formulae for the curvatures of minimal surfaces (see \S \ref{preuve des surfaces minimales}). In \S \ref{section on braids} we explain and develop 2) and 3) of Theorem \ref{proposition: inegalite}.\\
In \S \ref{section: second fundamental form}, we assume that 
the second fundamental
forms of the $\Sigma_n$'s have a common $L^2$ bound.  Then  the areas of the lifts of the $\Sigma_n$'s in the
Grassmannian $G_2^+(M)$ of oriented $2$-planes tangent to $M$ have a
common upper bound; when $n$ goes to infinity,  a closed $2$-current $C$  
bubbles off in the Grassmannian $G_2^+(T_pM)$ and we can read
$k^T$ 
and $k^N$ off the homology class of
$C$. If the $\Sigma_n$'s are minimal surfaces, $C$ is a complex curve.\\
 In \S \ref{paragraph on equality} we show that equality in (\ref{inegalite principale}) of Theorem \ref{proposition: inegalite} can have strong implications: for example if the $\Sigma_n$'s are minimal, $-k^T=|k^N|$ implies that $\partial\Sigma_0$ is a quasipositive link.  In \S \ref{exemples et contre-exemples} we give examples where $|k^N|< -k^T$.
\subsection{A motivation: the twistor degrees}\label{section:twistor degree}
The twistor degree was first defined by Eells and Salamon ([E-S]); some authors call it the {\it
	adjunction number}. If $\Sigma$ is a closed oriented surface immersed in a compact oriented Riemannian $4$-manifold $M$  with tangent bundle $T\Sigma$ and normal bundle $N\Sigma$, the positive and negative twistor degrees of $\Sigma$ are
\begin{equation}\label{positive twistor degree} 
d_+(\Sigma)=c_1(T\Sigma)+c_1(N\Sigma)\ \ \ \ \ \ \ \ \   d_-(\Sigma)=c_1(T\Sigma)-c_1(N\Sigma)
\end{equation}
\begin{rmk} ([G-O-R], see \S \ref{definition of branched immersions}). If $\Sigma$ has branch points, 
	the bundles $T\Sigma$ and $N\Sigma$ are also well defined and so are the $d_{\pm}(\Sigma)$'s.
\end{rmk} 
If $\Sigma$ is a complex curve in a complex surface $X$, the complex structure induces orientations on $\Sigma$ and $X$ and for these orientations, the adjunction formula ([G-H]) tells us that \begin{equation}\label{adjunction formula}
d_+(\Sigma)=<c_1(X),[\Sigma]>
\end{equation}
where $[\Sigma]$ is the $2$-homology class of $\Sigma$ in $X$ and $<.,.>$ denotes the duality between homology and cohomology. Thus, in the complex case, the twistor degree is a homotopy invariant. This is a very strong property: in the general case,  it is only an isotopy invariant between non branched immersions.\\
So a question arose in the 1990s: if a sequence of minimal surfaces $(\Sigma_n)$ converges to a branched minimal surface $\Sigma_0$, how do the $d_{\pm}(\Sigma_n)$'s and  $d_{\pm}(\Sigma_0)$ compare? We recover from 1) of Theorem \ref{proposition: inegalite} 
\begin{thm}\label{semicontinuity}([C-T])
	Let $M$ be a $4$-manifold and let $(\Sigma_n)$ be a sequence of immersed minimal surfaces in $M$ converging to a branched minimal surface $\Sigma_0$. Then 
	\begin{equation}\label{equation:semicontinuity}
	d_+(\Sigma_n)\leq d_+(\Sigma_0)\ \ \ \ \ \ d_-(\Sigma_n)\leq d_-(\Sigma_0)
	\end{equation}
\end{thm}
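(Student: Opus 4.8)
The plan is to reduce the statement to part~1) of Theorem~\ref{proposition: inegalite} through the curvature-integral description of the twistor degrees. First I would record the Chern--Weil identities
\[
d_\pm(\Sigma)=\frac{1}{2\pi}\int_\Sigma\bigl(K^T\pm K^N\bigr),
\]
valid for the closed immersed surfaces $\Sigma_n$, where they return the integral Chern numbers $c_1(T\Sigma_n)\pm c_1(N\Sigma_n)$ of \eqref{positive twistor degree}, and, by the remark following \eqref{positive twistor degree}, also for the branched limit $\Sigma_0$.

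The heart of the argument is the identity
\[
d_\pm(\Sigma_n)-d_\pm(\Sigma_0)=\sum_p\bigl(k^T_p\pm k^N_p\bigr),
\]
which expresses the gap between the twistor degrees as the sum of the fallouts over the branch points $p$ of $\Sigma_0$. To establish it I would split each curvature integral over $\Sigma_n$ into its part over $\bigcup_p B(p,\epsilon)$ and its part over the complement. Since the $\Sigma_n$ are minimal and converge to $\Sigma_0$, the convergence is smooth away from the finitely many branch points, so $\int_{\Sigma_n\setminus\bigcup_p B(p,\epsilon)}K^T_n$ and the analogous integral of $K^N_n$ converge to the matching integrals over $\Sigma_0$. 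Letting $n\to\infty$ and then $\epsilon\to 0$, the contributions of the small balls are, by definition, exactly $2\pi\sum_p k^T_p$ and $2\pi\sum_p k^N_p$; dividing by $2\pi$ yields the identity for $c_1(T\,\cdot\,)$ and for $c_1(N\,\cdot\,)$ separately, and hence for $d_\pm$.

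With this identity the theorem falls out at once. Hypothesis~1) of Theorem~\ref{proposition: inegalite} supplies $|k^N_p|\leq -k^T_p$ at every branch point $p$, whence both $k^T_p+k^N_p\leq 0$ and $k^T_p-k^N_p\leq 0$. Summing over $p$ gives $d_\pm(\Sigma_n)-d_\pm(\Sigma_0)\leq 0$, which is precisely \eqref{equation:semicontinuity}. (As a sanity check, in the complex algebraic case $k^T+k^N=0$ forces $d_+(\Sigma_n)=d_+(\Sigma_0)$, recovering the homotopy invariance of the positive twistor degree, while $k^T-k^N=2k^T<0$ records the genuine drop of $d_-$.)

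The one delicate point is the central identity: I must justify both the interchange of the limits in $n$ and $\epsilon$ and the $C^\infty_{\mathrm{loc}}$ convergence of the curvature integrals on the complement of the branch locus. This rests on the regularity theory for minimal surfaces, which keeps the second fundamental form from concentrating away from the branch points and so confines all curvature concentration to the finitely many $p$; the remaining bookkeeping is already packaged in the definitions of $k^T_p$ and $k^N_p$ and in Propositions~\ref{formule pour le tangent fallout} and~\ref{formule pour le normal fallout}. Once concentration is confined to the branch points, what is left is a finite sum and the estimate is purely algebraic.
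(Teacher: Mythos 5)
Your proof is correct and follows essentially the same route as the paper, which obtains Theorem \ref{semicontinuity} precisely by combining part 1) of Theorem \ref{proposition: inegalite} with the (implicit) identity that the drop in $d_\pm$ from $\Sigma_0$ to $\Sigma_n$ is the sum of the fallouts $k^T_p\pm k^N_p$ over the branch points; your write-up merely makes explicit the curvature-splitting bookkeeping that the paper leaves to the reader, noting that smooth convergence away from the branch points is already part of the paper's convergence assumption rather than something to be re-derived from regularity theory.
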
 
 A trivial but key observation is:
\begin{rmk}\label{changement de signe}
	If we change the orientation of $M$ but not on $\Sigma$, $c_1(N\Sigma)$ is changed into $-c_1(N\Sigma)$.
\end{rmk}
Hence if we change the orientation on $M$, $d_+$ becomes
$d_-$ and vice versa. Thus, even if the $\Sigma_n$'s are area minimizing, we may not have equality in (\ref{equation:semicontinuity}). For exemple \footnote{EXEMPLE.  Take $C_n=\{[z_0,z_1,z_2]\in\mathbb{C}P^2\slash z_0z_1^2+z_2^3=\frac{1}{n}z_0^3\}$,\\ $
	C_0=\{[z_0,z_1,z_2]\in\mathbb{C}P^2\slash z_0z_1^2+z_2^3=0\}$. then  (cf.  [G-H] for  details) for $n>0$,
	$c_1(TC_n)+c_1(NC_n)=-9$ while $c_1(TC_0)+ c_1(NC_0)=-3$.
	Yet, at the branch point $k^N-k^T=0$.} if $M$ is the projective plane $\mathbb{C}P^2$ endowed with the orientation {\it opposite} to the standard one, complex curves in $\mathbb{C}P^2$ are area minimizing surfaces in $M$. Any sequence $(C_n)$ of smooth algebraic curves of $\mathbb{C}P^2$ converging to a branched one $C_0$ verifies in $M$ 
$$d_+(C_n)< d_+(C_0)$$
yet at each branch point, we have $k^T=k^N$, thus $|k^N|=-k^T$ as in $\mathbb{C}P^2$.\\
\\ 
This is a reason to replace the global study of the twistor degree by the local study of the normal and tangent fallouts.\\
\\
REMARK.  To illustrate the difference between the local and global points of view, see  [M-S-V] for a sequence $(\Sigma_n)$ of minimal closed tori in a $(\mathbb{S}^4,g)$ converging to a sphere $\Sigma_0$  with two transverse double points. For one double point, $k^N=k^T$ and for the other one $k^N=-k^T$. So we have $|k^N|=-k^T$ in both cases but $d_+(\Sigma_n)<d_+(\Sigma_0)$ and 
$d_-(\Sigma_n)<d_-(\Sigma_0)$. \\
\\
We derive from 3) of Theorem \ref{proposition: inegalite} 
\begin{cor}\label{twistor degree and area minimizing}
	Let $M$ be an oriented compact Riemannian $4$-manifold, let $\Sigma_0$ be an area minimzing branched surface in $M$ and let $(\Sigma_n)$ be a sequence of compact oriented surfaces embedded in $M$ which converge to $\Sigma_0$ for the Hausdorff distance and uniformely on every compact set not containing the singular points of $\Sigma_0$. Then, for $n$ large enough, 
	$$d_+(\Sigma_n)\leq d_+(\Sigma_0)\ \ \ \ \ \ \ \ \ 
	d_-(\Sigma_n)\leq d_-(\Sigma_0)$$
\end{cor}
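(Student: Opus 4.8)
The plan is to deduce the corollary from part~3) of Theorem~\ref{proposition: inegalite} by way of a global \emph{defect formula} expressing the change in each twistor degree across the degeneration as the sum, over the branch points of $\Sigma_0$, of the corresponding fallouts. Since $\Sigma_n$ is a compact oriented surface embedded in $M$, Chern--Weil gives
\[
c_1(T\Sigma_n)=\frac{1}{2\pi}\int_{\Sigma_n}K^T_n,\qquad c_1(N\Sigma_n)=\frac{1}{2\pi}\int_{\Sigma_n}K^N_n,
\]
and the same formulae hold for the branched limit $\Sigma_0$, whose bundles $T\Sigma_0,N\Sigma_0$ and Chern numbers are well defined by [G-O-R] (the Remark following (\ref{positive twistor degree})). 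Writing $p_1,\dots,p_s$ for the branch points of $\Sigma_0$ and fixing a small $\epsilon>0$, I would split each integral over $\Sigma_n$ into the piece over $\bigcup_j\bigl(\Sigma_n\cap B(p_j,\epsilon)\bigr)$ and the piece over the complement.

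Away from the $p_j$ the surfaces $\Sigma_n$ converge to $\Sigma_0$ strongly enough that $K^T_n,K^N_n\to K^T_0,K^N_0$, so
\[
\frac{1}{2\pi}\int_{\Sigma_n\setminus\bigcup_j B(p_j,\epsilon)}K^{T}_n\ \xrightarrow{\,n\to\infty\,}\ \frac{1}{2\pi}\int_{\Sigma_0\setminus\bigcup_j B(p_j,\epsilon)}K^{T}_0\ \xrightarrow{\,\epsilon\to0\,}\ c_1(T\Sigma_0),
\]
and identically for $N$, while the integral over the balls tends, by the defining double limits of the fallouts, to $\sum_j k^T_{p_j}$ (resp. $\sum_j k^N_{p_j}$). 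As the integers $c_1(T\Sigma_n),c_1(N\Sigma_n)$ converge they are eventually constant -- this is the source of ``for $n$ large enough'' -- and the double limit produces the exact identities
\[
c_1(T\Sigma_n)=c_1(T\Sigma_0)+\sum_{j=1}^s k^T_{p_j},\qquad c_1(N\Sigma_n)=c_1(N\Sigma_0)+\sum_{j=1}^s k^N_{p_j}.
\]
Adding and subtracting yields $d_\pm(\Sigma_n)-d_\pm(\Sigma_0)=\sum_j\bigl(k^T_{p_j}\pm k^N_{p_j}\bigr)$. Since $\Sigma_0$ is area minimizing, part~3) of Theorem~\ref{proposition: inegalite} applies at each $p_j$ and gives $|k^N_{p_j}|\le -k^T_{p_j}$, whence $k^T_{p_j}+k^N_{p_j}\le0$ and $k^T_{p_j}-k^N_{p_j}\le0$; summing, both defects are nonpositive, i.e. $d_+(\Sigma_n)\le d_+(\Sigma_0)$ and $d_-(\Sigma_n)\le d_-(\Sigma_0)$.

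The hard part is the defect formula rather than the final inequality. It rests on interchanging $n\to\infty$ with $\epsilon\to0$ and on the convergence $\frac{1}{2\pi}\int_{\Sigma_0\setminus\bigcup_j B(p_j,\epsilon)}K^T_0\to c_1(T\Sigma_0)$: one must know that the intrinsic curvature of the branched limit is integrable near each $p_j$ and that the [G-O-R] bundles carry no distributional curvature at the branch points beyond what the fallouts already record, so that every bit of curvature lost from the complement is booked into $k^T_{p_j}$ and $k^N_{p_j}$. One must also know that $K^T_n,K^N_n\to K^T_0,K^N_0$ off the $p_j$; this curvature convergence is precisely the content of the convergence framework of \S\ref{paragraphe avec l'assumption} under which the fallouts are defined, and I would check that the corollary's Hausdorff-plus-uniform hypothesis, together with embeddedness of the $\Sigma_n$, places us in that framework.
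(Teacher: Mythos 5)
Your proposal is correct and takes essentially the same route as the paper: the corollary is stated there as a direct consequence of part 3) of Theorem \ref{proposition: inegalite}, the bridge being exactly the defect formula you prove — the paper's (implicit) assertion that $k^T\pm k^N$ measures how much the twistor degrees drop from $\Sigma_n$ to $\Sigma_0$ — which you make explicit via Chern--Weil and the splitting of the curvature integrals into the parts near and away from the branch points. The limit-interchange and curvature-integrability caveats you flag are real but are left at the same level of detail in the paper itself, which gives no further proof.
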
 
\subsection{Concluding question}
The following remains open 
\begin{ques*}
	Are there examples with $|k^N|>- k^T$?
\end{ques*}

\subsection*{Acknowledgements}First I would like to recall the memory of Alexander
Reznikov whose questions and conversations in the 1990s were the starting point of this paper. Sadly
he will never read it. Many wonderful conversations with the regretted Jim Eells enlightened and stimulated me.\\
A crucial step in this work was a meeting 
with R{\'e}mi Langevin who explained to me material from his
thesis and later pointed out a mistake in an earlier draft.  In a single conversation Alex Suciu 
helped me leap forward. Daniel Meyer's careful remarks greatly helped me improve the exposition. Finally I am very grateful to Marc Soret who has been an amazing partner for the study of branch points for many years. 
\section{Definitions and main properties}\label{Definitions and main properties}
\subsection{Branched immersions}\label{definition of branched immersions}
A map $f:S\longrightarrow M$ from a Riemann surface 
$S$ to a manifold $M$ is a {\it branched immersion}  if it is an immersion everywhere except at a discrete set of points
called
{\it branch points} and locally parametrized  ([G-O-R]) by a complex not necessarily holomorphic variable $z$ in a small disk $D$ centered at $0$ in $\mathbb{C}$ as
\begin{equation}\label{the branched immersion f}
f:z\mapsto (Re(z^N)+o_1(|z|^N), Im(z^N)+o_1(|z|^N), o_1(|z|^N),o_1(|z|^N))
\end{equation}
 where a function is a  $o_1(|z|^N)$ 
	if it is a $o(|z|^N)$ and its first partial derivatives are 
	$o(|z|^{N-1})$'s.
\\
If $G_2^+(M)$ is the
Grassmannian of oriented $2$-planes in $M$, the Gauss map
 sends a regular point $q$ of $f$ in $D$ to the  tangent plane to $f(D)$ at $q$; it extends continuously across the branch points ([G-O-R], [Gau]) so it defines an oriented $2$-plane bundle above $S$, called the {\it image tangent bundle}
$Tf$; and by taking orthogonal complements,  a normal
$2$-plane bundle $Nf$.
\subsection{Main assumption}\label{paragraphe avec l'assumption}
Throughout the paper, we consider the following situation.
\begin{ass*}\label{basic assumption}
$M$ is an oriented Riemannian $4$-manifold,
\begin{itemize}
	\item 
 the $\Sigma_n$'s are smooth $2$-surfaces in $M$ embedded or immersed with transverse double points
 \item 
 the $\Sigma_n$'s are connected and their genera have a common upper bound  
 \item 
 the areas of the $\Sigma_n$'s have a common upper bound
 \item
  $\Sigma_0$ is a finite union of disks topologically embedded in $M$ which are all branched at the same point $p\in M$. 
\end{itemize}
The $\Sigma_n$'s converge to $\Sigma_0$ in the Hausdorff sense and uniformely smoothly on every compact subset of $M$ not containing $p$.
\end{ass*}
\subsection{Curvature formulae}\label{curvature formulae}
Let $\Sigma$ be a surface in a Riemannian $4$-manifold $M$, we denote by $R^M$ be the curvature of $M$ and by $B$ the second fundamental form of $\Sigma$. If $p$ is a point in $\Sigma$, we let $(e_1,e_2)$ (resp. $(e_3,e_4)$) be an orthonormal basis of $T_p\Sigma$ (resp. $N_p\Sigma$). We have
\begin{equation}
	\Omega^T(e_1, e_2)=-\|B(e_1,e_2)\|^2+<B(e_1,e_1),B(e_2,e_2)>
	+<R^M(e_1, e_2)e_1, e_2>
\end{equation}
\begin{equation}\label{tangent to minimal}
	=-\|B(e_1,e_2)\|^2-\|B(e_1,e_1)\|^2+<R^M(e_1,e_2)e_3,e_4>\ \ \ \mbox{if\ }\Sigma \mbox{\ is minimal}
\end{equation}
\begin{equation}
	\Omega^N(e_1, e_2)=(B(e_1, e_1)-B(e_2, e_2))\wedge B(e_1,e_2)
	+<R^M(e_1,e_2)e_3,e_4>
\end{equation}
\begin{equation}\label{normal to minimal}
	=2B(e_1, e_1)\wedge B(e_1,e_2)
	+<R^M(e_1,e_2)e_3,e_4>\ \ \ \mbox{if\ }\Sigma \mbox{\ is minimal}
\end{equation}
Note that in (\ref{normal to minimal}),  we have identified $\Lambda^2(N_p\Sigma)$ with $\mathbb{R}$.
Since we are integrating on smaller and smaller surfaces, the terms in $R^M$ in (\ref{tangent to minimal}) and (\ref{normal to minimal}) have no bearing on the computation of $k^N$ and $k^T$. Moreover, since $B$ is a tensor, we can also compute $k^T$ and $k^N$ using the Euclidean metric obtained by extending the metric on $T_p\Sigma$.

\subsection{The tangent fallout: proof of Proposition \ref{formule pour le tangent fallout}} For $\epsilon$ and $n$, the Gauss-Bonnet formula with boundary states
\begin{equation}\label{gaussb}
\int_{\Sigma_n^\epsilon}\Omega^T_n-2\pi\chi(\Sigma_n^\epsilon)= 
-\int_{\partial\Sigma_n^\epsilon}k_g
\end{equation}
where $k_g$ is the geodesic curvature of the curve 
$\partial\Sigma_n^\epsilon$ on the surface $\Sigma_n^\epsilon$.\\
When $n$ tends to infinity, the right-hand side in (\ref{gaussb}) tends to 
$$-\int_{\partial\Sigma_0^\epsilon}k_g.$$
where $k_g$ is the geodesic curvature of $\partial \Sigma_0^\epsilon$ 
inside $\Sigma_0^\epsilon$. 
Since $\partial\Sigma_0^\epsilon$ is asymptotic to $m$ circles of multiplicity $N_i$,
$$\lim_{\epsilon\longrightarrow 0}\int_{\partial\Sigma_0^\epsilon}k_g=2\pi \sum_{i=1}^mN_i$$  
\qed
\subsection{The normal fallout: proof of Proposition 
\ref{formule pour le normal fallout}}
\subsubsection{Framings}
A framing of a link $L$ in $\mathbb{S}^3$ is a vector field along $L$ tangent to $\mathbb{S}^3$ and nowhere tangent to $L$; a framing gives us a self-linking number  of $L$. For our present purpose, we slightly rewrite things and define a framing $X$ of $L$ to be a vector field $X$ in $\mathbb{R}^4$ along $L$  and nowhere tangent to $L$. The the orthogonal projection $\hat{X}$ is never tangent to $L$ and we define the self-linking number $sl_X(L)$ to be the linking number between $L$ and the link obtained by pushing $L$ slightly in the direction of $\hat{X}$.\\
Notice that, if $X_t$, $t\in[0,1]$ is a smooth family of framings of $L$ in $\mathbb{R}^4$, $sl_{X_1}(L)=sl_{X_2}(L)$ 
\subsubsection{A lemma}
\begin{lem}\label{lemme sur stokes}
	Let $\Sigma$ be a surface with boundary and let $F:L\longrightarrow
	\Sigma$ be a $U(1)$-bundle. For $L$, we denote  $\langle,\rangle$ its scalar product, $J$ its complex structure, $\nabla$ its connection and $\Omega$ its curvature. \\
	If $s$ is a section of $L$
	which vanishes nowhere on the boundary of $\Sigma$, we define the following form on $\partial\Sigma$:
	$$\omega(u)=\frac{<\nabla_u s, Js>}{\|s\|^2}=<\nabla_u(\frac{s}{\|s\|}),
	J(\frac{s}{\|s\|})>.$$
	$$\mbox{Then}\  \ \ \ \ \ \ \frac{1}{2\pi}\int_\Sigma\Omega=\frac{1}{2\pi}\int_{\partial\Sigma}\omega+
	\sum_{i=1}^m\mbox{index}(z_i)$$
	where the $z_i$'s $i=1,...,m$ are the zeroes of $s$ inside $\Sigma$.
\end{lem}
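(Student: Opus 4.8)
The plan is to read $\omega$ as the connection $1$-form attached to the unit section $s/\|s\|$ and to apply Stokes' theorem on $\Sigma$ punctured at the zeros of $s$, using the classical fact that on a $U(1)$-bundle the curvature is the exterior derivative of the connection $1$-form in a unit frame.

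First, on the open set $\Sigma^\ast=\Sigma\setminus\{z_1,\dots,z_m\}$ where $s$ does not vanish, I would set $e=s/\|s\|$. Since $\nabla$ is compatible with $\langle,\rangle$ we have $\langle\nabla_u e,e\rangle=0$, so $\nabla_u e$ is a multiple of $Je$; because $\langle s,Js\rangle=0$, the two expressions the lemma gives for $\omega$ agree and in fact $\nabla_u e=\omega(u)\,Je$ on $\Sigma^\ast$. A direct evaluation of $R(u,v)e=(\nabla_u\nabla_v-\nabla_v\nabla_u-\nabla_{[u,v]})e$, using $\nabla J=0$, then gives $R(u,v)e=d\omega(u,v)\,Je$, i.e. $\Omega=d\omega$ on $\Sigma^\ast$.

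Next I would excise a disk $D_i$ of radius $\delta$ around each $z_i$ and apply Stokes on $\Sigma_\delta=\Sigma\setminus\bigcup_iD_i\subset\Sigma^\ast$:
\begin{equation}
\int_{\Sigma_\delta}\Omega=\int_{\Sigma_\delta}d\omega=\int_{\partial\Sigma}\omega-\sum_{i=1}^m\int_{\partial D_i}\omega ,
\end{equation}
each $\partial D_i$ being oriented as the boundary of $D_i$. To identify the small-circle terms, I would pick near $z_i$ a smooth nonvanishing unit section $\tau$ with $\nabla\tau=\beta\,J\tau$ for a smooth real $1$-form $\beta$, and write $s=\|s\|\,e^{i\phi}\tau$; the computation above gives $\omega=d\phi+\beta$ on $D_i\setminus\{z_i\}$. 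As $\delta\to0$ the bounded term $\int_{\partial D_i}\beta$ tends to $0$, while $\frac1{2\pi}\int_{\partial D_i}d\phi$ is the winding number of $s$ about $z_i$, namely $\mathrm{index}(z_i)$. Letting $\delta\to0$, dividing by $2\pi$, and using $\int_{\Sigma_\delta}\Omega\to\int_\Sigma\Omega$ then yields the formula.

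The step deserving the most care is the sign bookkeeping: the convention relating $\Omega$ to $d\omega$ (equivalently the orientation of the fibres via $J$), the induced orientation of $\partial\Sigma$, and the sign convention for $\mathrm{index}(z_i)$ all enter the final identity, and they must be fixed consistently so that the boundary and index contributions carry exactly the signs displayed in Lemma \ref{lemme sur stokes}. Apart from this, and the harmless vanishing of $\int_{\partial D_i}\beta$, the argument is the standard ``puncture and apply Stokes'' computation and requires no further estimates.
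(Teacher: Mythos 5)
Your route is the same as the paper's: the paper's entire proof is the one\,-\,line observation that $\omega$ is the restriction to $\partial\Sigma$ of a connection form defined on $\Sigma$ away from the zeroes of $s$, that its exterior derivative is the curvature there, and that Stokes' formula applies. Your excision of the disks $D_i$, the local expression $\omega=d\phi+\beta$, and the identification of $\frac{1}{2\pi}\int_{\partial D_i}d\phi$ with $\mathrm{index}(z_i)$ are exactly the details that this one-line proof leaves implicit, and those steps are correct.

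Your closing paragraph, however, is wrong on the one point it addresses: the relative sign of the boundary term and the index term is \emph{not} adjustable bookkeeping. Flipping $J$ or the orientation of $\Sigma$ changes the signs of all three quantities simultaneously, and flipping the sign convention for $\Omega$ changes only the left-hand side (hence both right-hand coefficients together); under every such flip the boundary and index contributions keep \emph{opposite} relative signs. Your own computation exhibits this: with your convention $\Omega=d\omega$, Stokes gives
\[
\frac{1}{2\pi}\int_\Sigma\Omega=\frac{1}{2\pi}\int_{\partial\Sigma}\omega-\sum_{i=1}^m\mathrm{index}(z_i),
\]
while with the normalization under which $\frac{1}{2\pi}\int_\Sigma\Omega$ is the degree of $L$ when $\Sigma$ is closed (the one the paper needs, since the lemma is applied to $\Omega^N$ to produce Gauss--Bonnet-type identities) one gets
\[
\frac{1}{2\pi}\int_\Sigma\Omega=-\frac{1}{2\pi}\int_{\partial\Sigma}\omega+\sum_{i=1}^m\mathrm{index}(z_i).
\]
In neither case do the two right-hand terms carry the same sign, as they do in the printed statement of Lemma \ref{lemme sur stokes}. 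A two-line check that the printed signs are impossible: take $\Sigma$ the flat unit disk, $L$ the trivial flat line bundle, and $s(z)=z$; then $\Omega=0$, while $\frac{1}{2\pi}\int_{\partial\Sigma}\omega=1$ and $\sum\mathrm{index}=1$, so the statement as printed would read $0=2$. The discrepancy is therefore a sign misprint in the lemma itself; your argument actually proves the corrected identity (the second display), and you should say so explicitly instead of deferring to conventions. The misprint is harmless downstream: in the application (\ref{stokes sur normal}), the boundary term disappears in the double limit (\ref{limite des omega}), so only the index term survives.
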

\begin{proof}
	We notice that $\omega$ is the restriction to $\partial \Sigma$  of a connection form on $L$ defined on $\Sigma$ outside of the zeroes of $s$; thus $\Omega=d\omega$ and we use Stokes' formula.
\end{proof}
\subsection{The proof}
 First we note that for $\epsilon$ small enough and $n$ large enough, $X$ is never parallel to the direction of a point in $q\in\partial\Sigma_n$ and it does not belong to  $T_q\Sigma_n$. \\
For $n$ large enough and $\epsilon$ small enough, there is  an isotopy between the following two framings of  $\partial \Sigma_n^\epsilon$: $\hat{X}$ and the framing $X_n$ obtained by projecting $X$ to 
$N\Sigma_n^\epsilon\cap T_q\mathbb{S}(0,\epsilon)$ for  $q\in\partial \Sigma_n^\epsilon$.
Note that the normal bundles $N\Sigma_n^\epsilon$'s are taken w.r.t. the metric $g$ while the orthogonal projections are w.r.t. the Euclidean metric $g_0$ on $T_pM$.
Since the framings are isotopic, we have
\begin{equation}
sl_{X_n}(\Gamma_\epsilon)=sl_{X}(\Gamma_\epsilon)
\end{equation}
Then extend $X_n$ as a global section, also denoted $X_n$, of $N\Sigma_n^\epsilon$ above $\Sigma_n^\epsilon$. \\ Let $\nabla^{(n)}$ be the connection on $N\Sigma^\epsilon_n$ derived by the Levi-Civita connection on $M$; let $J_n$ the complex structure on $N\Sigma_n$ compatible with its  $SO(2)$-structure. We define the form $\omega_n^N$ by
\begin{equation}
\label{grosse formule1}
\forall u\in T(\partial \Sigma_n),\ \  \omega_n^N(u)=
\frac{<\nabla^{(n)}_uX_n, J_nX_n>}{\|X_n^N\|}
\end{equation}
Lemma \ref{lemme sur stokes} tells us that 

\begin{equation}
\label{stokes sur normal}
\frac{1}{2\pi}\int_{\Sigma_n^\epsilon} \Omega_n^N= 
\frac{1}{2\pi}\int_{\partial \Sigma_n^\epsilon}\omega_n^N
+N(X_n,\Sigma_n^\epsilon)
\end{equation} 
where $N(X_n,\Sigma_n^\epsilon)$ the number of zeroes 
of $X_n$ in  $\Sigma_n^\epsilon$. \\
We now push $\Sigma_n^\epsilon$ slightly in the direction of $X_n$ and get  surface $\hat{\Sigma}_n^\epsilon$; then, a zero of $X_n$ corresponds to an intersection of $\Sigma_n^\epsilon$ with   $\hat{\Sigma}_n^\epsilon$. The other points in $\Sigma_n^\epsilon\cap\hat{\Sigma}_n^\epsilon$ come from the double points of $\Sigma_n^\epsilon$, each double point giving rise to $2$ points in $\Sigma_n^\epsilon\cap\hat{\Sigma}_n^\epsilon$. All the points in this discussion are counted  with sign. \\
Now the total number of points in $\Sigma_n^\epsilon\cap\hat{\Sigma}_n^\epsilon$ is equal  to $lk(\Gamma^\epsilon,\hat{\Gamma}^\epsilon)$,where $\hat{\Gamma}^\epsilon$ is obtained by pushing $\Gamma^\epsilon$ slightly in the direction of $\hat{X}$  so we derive
\begin{equation}
N(X_n,\Sigma_n^\epsilon)=lk(\Gamma^\epsilon,\hat{\Gamma}^\epsilon)-2(\#\mbox{double points of\ }\Sigma_n^\epsilon)
\end{equation}
The components of $\partial\Sigma_n^\epsilon$ tend to great circles in $\mathbb{S}(p,\epsilon)$ bounding flat disks in $\mathbb{R}^4$. The various $X_n$'s tend to vectors which are parallel along these disks and the limits of the $J_n$'s are also parallel. Thus,
for a fixed $\epsilon$, we have
\begin{equation}\label{limite des omega}\lim_{\epsilon\longrightarrow 0}
\lim_{n\longrightarrow\infty}
\int_{\partial \Sigma_n^\epsilon}\omega_n^N=\lim_{\epsilon\longrightarrow 0}
\int_{\partial \Sigma_0^\epsilon}\omega_0^N=0
\end{equation}
\qed
\section{Minimal surfaces: proof of Theorem \ref{proposition: inegalite}.1}\label{preuve des surfaces minimales}
  Theorem \ref{proposition: inegalite} 1) follows immediately from \S \ref{curvature formulae}.\\
\\
EXEMPLE. Consider a branched minimal disk $\Sigma_0$ parametrized as follows
$$z\mapsto (Re(z^3)+o(|z|^3), Im(z^3)+o(|z|^3), Im(z^{50}), Re(z^{110}e^{i\alpha}))$$
where $\alpha$ is a generic real number which ensures that $\Sigma_0$ is topologically embedded. It follows from [S-V2] that the writhe of $\partial\Sigma_0$ is $20$, up to sign. Thus, if $(\Sigma_n)$ is a sequence of connected minimal surfaces converging to $\Sigma_0$ as in Assumption 1, we have $g(\Sigma_n)\geq 9$
for $n$ large enough.

\section{Braids: proof of Theorem \ref{proposition: inegalite}.2 and \ref{proposition: inegalite}.3}\label{section on braids}
\subsection{Preliminaries on braids}
We refer the reader to [B-B] for more details.\\
If $D$ is an oriented line in $\mathbb{R}^3$, a {\it closed braid} $L$ of axis $D$  in $\mathbb{R}^3$ is a disjoint union of a loops $\gamma_i(t)$, $i=1,...,k$ whose cylindrical coordinates $(\rho_i, \theta_i, z)$, with $D$ as a vertical axis,  verify for every $t$,
\begin{equation}
\label{cylindrique}
\rho_i(t)\neq 0, \ \ \ \ \theta'_i(t)>0
\end{equation}
The {\it number of strands} of $L$ is sum of the degrees of $\theta_i:\mathbb{S}^1\longrightarrow :\mathbb{S}^1$. The {\it writhe} or {\it algebraic length} $e(L)$ is the linking number of $L$ with a loop $\hat{L}$ obtained by pushing $L$ slightly in the direction of $D$.\\
If we add a point at infinity $\infty$, the $L$ above becomes a braid in the $3$-sphere, its axis being the great circle defined by $D$ and $\infty$. Conversely consider a link $L$ in $\mathbb{S}^3$, a great circle $C$ disjoint from $L$ and a point $A$ in $C$. Identify $A$ with the point at infinity on $\mathbb{S}^3$ and let $D=C-\{A\}$. The link $L$ will be a braid of axis $C$ if the components of its stereographic projection with pole $A$ satisfies (\ref{cylindrique}) for cylindrical coordinates of axis $D$.
\\
\\
 If $L$ is an oriented link in $\Bbb{S}^3$ we let $\chi_s(L)$ be the greatest Euler characteristic of a smooth $2$-surface $F$ in $\Bbb{B}^4$ without closed components smoothly embedded in $\Bbb{B}^4$ with boundary $L$. 
Lee Rudolph proved the following {\it slice Bennequin inequality}.
\begin{thm}\label{slice bennequin}([Ru 3]) 
	Let $\beta$ be a closed braid with $n$ strands and algebraic crossing number 
	$e(\beta)$. Then
	$$\chi_s(L)\leq n-e(\beta).$$
\end{thm}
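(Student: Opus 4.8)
The plan is to recast the statement as a bound on the self-linking number of a transverse link, and then feed it into the genus-minimizing property of complex curves proved by gauge theory. First I would present $L=\hat\beta$ as a transverse link in the standard contact structure on $\mathbb{S}^3=\partial\mathbb{B}^4$, using the braid framing that already appears in Proposition \ref{formule pour le normal fallout}. With respect to that framing the self-linking number of the closure of an $n$-strand braid of algebraic length $e(\beta)$ is
\[
sl(L)=e(\beta)-n .
\]
Since any connected surface of genus $g$ with $b$ boundary components has $\chi=2-2g-b$, the assertion $\chi_s(L)\le n-e(\beta)$ is exactly equivalent to the Bennequin-type inequality $sl(L)\le -\chi_s(L)$, to be proved for \emph{every} properly embedded orientable $F\subset\mathbb{B}^4$ with $\partial F=L$ and no closed components. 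As a sanity check, the $1$-braid unknot gives $sl=-1$ and $\chi_s=1$, so both sides agree.

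The primary strategy is to promote $F$ to a closed surface in a closed symplectic $4$-manifold and invoke the adjunction inequality. Concretely, I would realize $\mathbb{B}^4\subset\mathbb{C}^2$ as the standard Stein filling of $(\mathbb{S}^3,\xi_{\mathrm{std}})$ and cap it off, say by embedding $\mathbb{B}^4$ in $\mathbb{CP}^2$ as the complement of a neighbourhood of a line. Gluing $F$ to a suitable piece in the cap produces a closed embedded surface $\hat F$ in the closed symplectic manifold, whose homology class $[\hat F]$ is controlled by the braid data. Applying the Kronheimer--Mrowka adjunction (Thom conjecture) inequality to $\hat F$ bounds $2g(\hat F)-2$ from below by $[\hat F]^2+\langle c_1, [\hat F]\rangle$; unwinding the self-intersection and canonical-class terms, these assemble precisely into the quantity $e(\beta)-n$, and the genus/boundary bookkeeping turns the estimate for $\hat F$ back into $sl(L)\le -\chi(F)$. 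Taking the supremum over $F$ yields the theorem.

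The sharpness of the bound, and Rudolph's original route, comes from the \emph{quasipositive} case. When $\beta$ is a product of $k$ conjugates $w\sigma_i w^{-1}$ of positive generators, each band contributes $+1$ to the algebraic length, so $e(\beta)=k$, and its closure bounds a piece of a genuine complex algebraic curve $\Sigma\subset\mathbb{B}^4$ with $\chi(\Sigma)=n-k=n-e(\beta)$. Because complex curves minimize genus (equivalently maximize $\chi$) among all smooth slice surfaces with the same boundary, this forces $\chi_s(L)=n-e(\beta)$ in the quasipositive case. The general inequality then follows either from the adjunction argument above or by a stabilization/comparison argument: adding positive bands or cancelling negative crossings changes $(n,e(\beta))$ and the attainable $\chi$ compatibly, so that every braid is dominated by a quasipositive one for which equality holds.

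The main obstacle is that the heart of the argument is the deep gauge-theoretic input---the Kronheimer--Mrowka adjunction/Thom inequality via Donaldson (later Seiberg--Witten) theory---which I would necessarily take as a black box rather than reprove. Everything else is comparatively routine but requires care: the capping construction must produce a genuinely embedded closed $\hat F$ with correctly computed $[\hat F]^2$ and $\langle c_1,[\hat F]\rangle$, the low-genus cases (spheres, disconnected boundary) must be treated separately so the adjunction inequality applies, and the combinatorial reduction from an arbitrary braid to a quasipositive model must keep the writhe and Euler-characteristic estimates exactly aligned so no slack is lost. I expect the homological bookkeeping of $[\hat F]$ and the edge cases to be the most error-prone steps, while the conceptual content rests entirely on the gauge-theoretic minimal-genus statement.
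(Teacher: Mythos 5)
The paper itself offers no proof of this statement: it is quoted as Rudolph's theorem [Ru 3] and used as a black box (e.g.\ in the proof of Theorem \ref{quand le bord est une tresse}), so there is no in-paper argument to compare against; the relevant comparison is with Rudolph's own proof. Your sketch essentially reconstructs it, but it is your \emph{secondary} route, not the capping argument, that is the complete one. Rudolph's proof is exactly the stabilization/comparison step you mention in passing: if $\beta=A\sigma_i^{-1}B$ then $\beta\cdot(B^{-1}\sigma_i B)=AB$, so multiplying on the right by a conjugate of a positive generator deletes a negative letter, and at the level of surfaces this attaches a positive band to any $F\subset\mathbb{B}^4$ with $\partial F=\hat\beta$, lowering $\chi$ by $1$ while raising the writhe by $1$; after $q$ such steps ($q$ the number of negative letters) one reaches a positive braid $\beta_0$ with $p$ letters, whose Bennequin surface is a piece of algebraic curve, and the Kronheimer--Mrowka local Thom conjecture gives $\chi_s(\widehat{\beta_0})=n-p$. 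Hence $\chi(F)-q\le n-p$, i.e.\ $\chi(F)\le n-e(\beta)$, which is the theorem. Your reformulation $sl(\hat\beta)=e(\beta)-n$ and the equivalence with $sl(L)\le-\chi_s(L)$ are correct, and taking the gauge-theoretic input as a black box is unavoidable (Rudolph does the same).

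By contrast, your \emph{primary} route --- gluing $F$ to ``a suitable piece in the cap'' $\mathbb{CP}^2\setminus\mathbb{B}^4$ and applying the closed adjunction inequality --- has a genuine gap as stated. The cap piece must be assembled from $n$ parallel copies of the line (whose boundaries form the $(n,n)$-torus link, the closure of the full twist $\Delta^2$) together with positive bands realizing the passage from $\hat\beta$ to $\widehat{\Delta^2}$; the Euler-characteristic bookkeeping closes up only if $\Delta^2\beta^{-1}$ is quasipositive with exactly $n(n-1)-e(\beta)$ bands, which is not automatic for an arbitrary braid and requires either an additional argument or precisely the band-deletion trick above. So the capping picture is heuristically right but incomplete as written, whereas the reduction to the quasipositive case, which you relegate to an alternative, is the actual proof.
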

\subsection{A criterion for the boundary to be a braid}
\begin{thm}\label{quand le bord est une tresse}
	1) Assume that $\Sigma_0$ is made up of $m$ branched disks with oriented tangent planes at $p$ denoted $P_1$, ..., $P_m$, each one of branching order $N_i-1$. If there exists an oriented plane $Q$ meeting all the $P_i$'s transversally at $p$ in a positive intersection, then $\partial\Sigma_0$ is a braid with $N_1+...+N_m$ strands and algebraic length $k^N$. Thus 
	\begin{equation}\label{l'inegalite cruciale}
	|k^N|\leq -k^T
	\end{equation}
	2) This is true in particular if one of the following three assumptions is true: 
	\begin{enumerate}[(i)]
		\item 
		there exists an $a\in\{1,...,m\}$ such that for every 
		$i\in\{1,...,m\}$, $i\neq a$, 
		$$P_a\cdot P_i>0$$
		\item 
		$m\leq 3$ 
		\item 
		there exists an orthogonal complex structure $J$ on $T_pM$ w.r.t. which every $P_i$ is a complex line. This happens in particular if $\Sigma_0$ is area minimizing (see [Mo]).
	\end{enumerate}
	\end{thm}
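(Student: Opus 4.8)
The plan is to use $Q$ to realise $\Gamma^\epsilon=\partial\Sigma_0$ as an explicit braid, feed this into the slice Bennequin inequality together with the two fallout formulae, and then reduce Part~2 to a search for $Q$ inside $G_2^+(T_pM)$. For the braid, I would take the great circle $Q\cap\mathbb{S}(p,\epsilon)$ as axis and $\arg\circ\pi_{Q^\perp}$ as braiding coordinate, $\pi_{Q^\perp}$ being orthogonal projection onto $Q^\perp$. Transversality of $Q$ and $P_i$ is exactly that $\pi_{Q^\perp}|_{P_i}\colon P_i\to Q^\perp$ is an isomorphism, and positivity of the intersection is exactly that it is orientation preserving. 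Near $p$ the $i$-th disk is $z\mapsto f_i(z)$ with leading term of order $z^{N_i}$ valued in $P_i$ (normal form \eqref{the branched immersion f}); composing with the orientation-preserving isomorphism $\pi_{Q^\perp}|_{P_i}$ carries the circle $|z|=\delta$ to a centred ellipse in $Q^\perp$ run $N_i$ times counterclockwise, so $\arg$ increases monotonically, and the $o_1$ remainder cannot spoil monotonicity once $\epsilon$ is small. Hence $\Gamma^\epsilon$ is a braid with $N_1+\dots+N_m$ strands; choosing the framing of Proposition~\ref{formule pour le normal fallout} to be a fixed $v\in Q$ (automatically transverse to every $P_i$), the braid framing is the push-off in the axis direction, so its algebraic length is $k^N$ by that proposition.

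For the inequality I would apply Theorem~\ref{slice bennequin} twice. On the braid itself, $\chi_s(\Gamma^\epsilon)\le (N_1+\dots+N_m)-k^N$; since the (embedded, or double-point-resolved) surface $\Sigma_n^\epsilon$ bounds $\Gamma^\epsilon$ we get $\chi(\Sigma_n^\epsilon)\le\chi_s(\Gamma^\epsilon)$, and combining $\chi(\Sigma_n^\epsilon)=2-2g(\Sigma_n^\epsilon)-m$ with Proposition~\ref{formule pour le tangent fallout} shows $(N_1+\dots+N_m)-\chi(\Sigma_n^\epsilon)\to -k^T$, so $k^N\le -k^T$. Reflecting $\mathbb{R}^4$ by an orientation-reversing isometry fixing the sphere turns $\Gamma^\epsilon$ into the mirror braid (same strand number, algebraic length $-k^N$) bounding the reflected surface of the same Euler characteristic; the identical computation yields $-k^N\le -k^T$. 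Together these give $|k^N|\le -k^T$, namely \eqref{l'inegalite cruciale}; this is the local shadow of the sign subtlety recorded in Remark~\ref{changement de signe}.

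Part~2 reduces to producing $Q$. I would pass to $G_2^+(T_pM)\cong \mathbb{S}^2_+\times\mathbb{S}^2_-$ coming from $\Lambda^2=\Lambda^+\oplus\Lambda^-$, under which an oriented plane $P$ becomes a pair of unit vectors $(u_P,w_P)$ and $Q\cdot P=\tfrac12(\langle x,u_P\rangle-\langle y,w_P\rangle)$ for $Q=(x,y)$. So I seek $(x,y)$ with $\langle x,u_i\rangle>\langle y,w_i\rangle$ for every $i$. Case (iii) is immediate: if all $P_i$ are $J$-complex then all $u_i$ equal the single $u_0\in\mathbb{S}^2_+$ attached to $J$, so any other $J$-complex line $Q=(u_0,y)$ with $y\ne w_i$ works, and area minimizing reduces to this by [Mo]. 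Case (i) is continuity: a $J_a$-complex line $Q$ very close to $P_a$ has $Q\cdot P_a>0$ while $Q\cdot P_i\to P_a\cdot P_i>0$ for $i\ne a$.

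The hard part will be (ii). Here I would first show that $(x,y)$ fails to exist only when $0\in\mathrm{conv}\{(u_i,w_i)\}\subset\mathbb{R}^3\oplus\mathbb{R}^3$, i.e. when $\sum\lambda_i u_i=\sum\lambda_i w_i=0$ for some $\lambda_i\ge0$ not all zero; a norm computation then shows any such relation forces equal self-dual and anti-self-dual angles $\theta^+_{ij}=\theta^-_{ij}$ for some pair, hence $Q\cdot$-degeneracy $P_i\cdot P_j=0$, i.e. $P_i,P_j$ non-transverse, so for $m\le 3$ planes in general position the obstruction is absent. The genuinely delicate step, which I expect to be the main obstacle, is the converse: upgrading ``no obstruction'' to an honest \emph{unit} pair $(x,y)$, because a separating hyperplane in $\mathbb{R}^6$ only supplies a weighted transversal $(a,b)$ with $|a|\ne|b|$, whereas a braid axis must be a true plane, for which $|\tau^+|=|\tau^-|$. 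I would close this gap by an explicit construction for $m\le3$ (placing $y$ far from all $w_i$ and $x$ in the spherical-convex region of the $u_i$, then adjusting the two radii), or failing that by a degree/continuity argument for the map $(x,y)\mapsto(Q\cdot P_i)_i$ on $\mathbb{S}^2_+\times\mathbb{S}^2_-$.
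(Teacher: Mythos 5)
Your Part 1 and cases (i), (iii) of Part 2 follow essentially the paper's own proof: the paper also takes the braid axis to be the great circle $\mathbb{S}^3\cap Q$ (realized there via a stereographic projection whose pole is one vector of an orthonormal basis of $Q$ and whose axis is the other), also identifies the algebraic length with $k^N$ by feeding a vector of $Q$ into Proposition \ref{formule pour le normal fallout}, also combines Rudolph's slice Bennequin inequality with Proposition \ref{formule pour le tangent fallout}, and also gets the absolute value by reversing the orientation of $\mathbb{R}^4$ (which fixes $k^T$, flips $k^N$, and leaves the hypothesis on $Q$ intact). Your perturbation argument for (i) and the complex-lines argument for (iii) are likewise the paper's.

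Case (ii) is where you genuinely diverge, and your route is the sounder one. The paper proves (ii) by asserting that among three pairwise transverse oriented planes one necessarily meets the other two positively, and then invoking (i); that assertion is false. Take $P_1=e_1\wedge e_2$, $P_2=e_3\wedge e_4$, $P_3=\tfrac12(e_3-e_1)\wedge(e_2-e_4)$: these are pairwise transverse, yet $P_1\cdot P_2=+1$ while $P_1\cdot P_3=P_2\cdot P_3=-\tfrac12$, so none of the three meets the other two positively (a transversal $Q$ does exist for this configuration, but it is not one of the $P_i$'s, so (i) cannot be invoked). Your convexity argument bypasses this. Its second half is correct as you state it: if $\sum_i\lambda_iu_i=\sum_i\lambda_iw_i=0$ with $\lambda_i\ge0$ not all zero, then either exactly two $\lambda$'s are nonzero, forcing $u_j=-u_k$ and $w_j=-w_k$, hence $P_j\cdot P_k=0$; or all three are nonzero, and comparing $\|\lambda_1u_1\|^2=\|\lambda_2u_2+\lambda_3u_3\|^2$ with the same identity for the $w$'s gives $2\lambda_2\lambda_3\bigl(\langle u_2,u_3\rangle-\langle w_2,w_3\rangle\bigr)=0$, i.e.\ $P_2\cdot P_3=0$. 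And the gap you flag--upgrading a separating pair $(a,b)$ with $|a|\neq|b|$ to unit vectors--closes exactly along the lines of your first fallback, and only because $m\le3$: normalize so $\max(|a|,|b|)=1$, say $|a|=1$, put $x=a$, and consider the set $\{y\in\mathbb{R}^3:\langle y,w_i\rangle<\langle x,u_i\rangle,\ i=1,\dots,m\}$, which is open, convex and nonempty (it contains $b$, of norm $\le1$). Since three vectors cannot positively span $\mathbb{R}^3$, there is $y_0\neq0$ with $\langle y_0,w_i\rangle\le0$ for all $i$; the ray $b+ty_0$ stays in the set, so the set is unbounded, and by the intermediate value theorem the norm attains the value $1$ on it, giving the required unit $y$ (argue symmetrically when $|b|=1$). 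So no degree argument is needed. Finally, your ``general position'' proviso is not cosmetic: (ii) fails without some transversality among the $P_i$'s--take $u_1,u_2,u_3$ pairwise at angle $2\pi/3$ on a great circle of $\mathbb{S}^2_+$ and $w_i$ likewise in $\mathbb{S}^2_-$; then $\sum_i\bigl(\langle x,u_i\rangle-\langle y,w_i\rangle\bigr)=0$ for every $(x,y)$, so no $Q$ exists, and indeed every pair satisfies $P_i\cdot P_j=0$. The paper's proof of (ii) tacitly assumes pairwise transversality as well, so your caveat matches its intent while your argument actually establishes the claim.
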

\begin{proof} 1). We parametrize one of the branched disks by  $f:D\longrightarrow \mathbb{R}^4$  as in (\ref{the branched immersion f}),  $N-1$ being its branching order and we let $P$ be its tangent plane at $p$. Let $e_1,e_2$ be a positive basis of $P$ and $e_3,e_4$ be a positive orthonormal basis of $Q$. Identify $e_4$ with $\infty$; let $S$ be the stereographic projection of $\mathbb{S}^3$  with pole $e_4$ to the $3$-space $\mathbb{R}^3$ generated by $e_1,e_2,e_3$. The loop $\partial (f(D)\cap\mathbb{S}(0,\epsilon)))$ is close to the circle of radius $\epsilon$ in the plane  generated by $e_1,e_2$ travelled $N$ times. Thus the stereographic projection  $S(\partial (f(D)\cap\mathbb{S}(0,\epsilon))$ is a $N$-braid in $\mathbb{R}^3$ of axis $e_3$. We do the same for every $P_i$ and derive that $\partial\Sigma_0$ is a braid $L$ with $N_1+...+N_m$ strands and of axis $e_3$.\\
	The vector $e_3$ verifies the assumptions for the vector $X$ in Proposition \ref{formule pour le normal fallout} so $k^N=e(L)$.
	 Thus Theorem \ref{slice bennequin} yields 
	\begin{equation}
\label{rudolph applique au k}
k^T=\chi(\Sigma_n^\epsilon)-\sum_{i=1}^mN_i\leq e(\partial\Sigma_0)= k^N
\end{equation}
 We now change the orientation on $\mathbb{R}^4$; the quantity $k^N$ becomes $-k^N$ while $k^T$ is unchanged. We define the oriented plane $\tilde{Q}$ as $Q$ with the opposite orientation. The plane $\tilde{Q}$ meets each $P_i$ positively for the new orientation on $\mathbb{R}^4$ so (\ref{rudolph applique au k}) applies and we have $k^T\leq -k^N$.
\\
\\
	2) If (i) is true, there exists a plane $Q$ close to $P_a$ such that $Q\cdot P_a=1$ and $Q\cdot P_i=1$ for every $i$ with $i\neq a$ so 1) applies.\\ To prove (ii) one checks that, given $3$ two by two transverse planes, there is necessarily one which intersects the other two positively and we use (i).\\ Property (iii) follows from the fact that two complex lines always intersect positively for the orientation on $M$ given by the complex structure. If $\Sigma_0$ is area minimizing, the tangent cone at $p$, i.e. the tangent planes to the branched disks making up $\Sigma_0$ is area minimizing in $T_pM$ (see [Ch]); hence these planes are all complex for some parallel complex structure ([Mo]).
\end{proof}
REMARK. Not all boundaries of a branched surface can be presented as a closed braid. For exemple, take two disks $\mathcal D_1$, $\mathcal D_2$ branched at $p$ and suppose that $T_p\mathcal D_1$ and $T_p\mathcal D_2$ are equal set-wise but have opposite orientation; then the boundary of $\Sigma=\mathcal D_1\cup \mathcal D_2$ cannot be presented as a braid. 
\section{Bounding the second fundamental form}\label{section: second fundamental form}
We look at sequence of surfaces satisfying Assumption \ref{basic assumption} with $L^2$ bounded second fundamental forms. As a counterexemple, consider a sequence of disk-like surfaces $(S_n)$ in $\mathbb{R}^3$ which converges to $S_0=\{(x,y,0)\in\mathbb{R}^3: x^2+y^2\leq 1\}$ as in Assumption \ref{basic assumption} and such that for every $n$, $S_n$ has $n$ bumps of height $1$ in $\{(x,y,z)\in\mathbb{R}^3: x^2+y^2\leq \frac{1}{n}\}$. The $L^2$ norm of the second fundamental forms of the $S_n$'s tend to infinity.
\subsection{Preliminaries: twistor spaces}
If $M$ is a Riemannian $4$-manifold, we let $\Lambda^2(M)$ be the bundle of tangent $2$-vectors and  $\star:\Lambda^2(M)\longrightarrow \Lambda^2(M)$ be the Hodge star operator which allows the splitting of $\Lambda^2(M)$ into  $\pm 1$-eigenspaces $\Lambda^\pm (M)$. We denote by $Z_{\pm}(M)=\mathbb{S}(\Lambda^\pm (M))$ the unit sphere bundles of  $\Lambda^\pm (M)$.\\
We identify an oriented $2$-plane generated $P$ by a positive orthonormal basis $(e_1,e_2)$ with the $2$-vector $e_1\wedge e_2$ and derive the isomorphism
\begin{equation}\label{identification de la grassmannienne} 
G_2^+(M)\cong Z_+(M)\times Z_-(M)
\end{equation}
\begin{equation}
P\mapsto \big(\frac{1}{\sqrt{2}}(P+\star P),\frac{1}{\sqrt{2}}(P-\star P)\big)
\end{equation}
The bundle $Z_+(M)$ (resp. $Z_-(M)$) is the bundle of almost complex structures on $M$ compatible with the metric and which preserve (resp. reverse) the orientation on $M$. The $Z_\pm(M)$'s  are called {\it twistor spaces} and Eells-Salamon ([E-S]) endowed them with almost complex structures ${\mathcal I}_\pm$. We recall the construction of ${\mathcal I}_+$, the construction for ${\mathcal I}_-$ is identical.\\
Let $p\in M$ and $J$  an element in the fibre of $Z_+(M)$ above $p$. Since  $Z_+(M)$ inherits a metric and a connection from  $M$,  we split $T_{p,J}Z_+(M)$ into a vertical space $V_{p,J}Z_+(M)$ and a horizontal space $H_{p,J}Z_+(M)$.
\begin{itemize}
	\item 
	Since $H_{p,J}Z_+(M)$ is isomorphic to $T_pM$, we  define ${\mathcal I}_+$ on it by transporting the structure $J$ from $T_pM$.
	\item 
	The space $V_{p,J}Z_+(M)$ is the space orthogonal to $J$ in $\Lambda^+(M)_p$. It inherits a metric and orientation from  $\Lambda^+(M)_p$; these yield a complex structure on $V_{p,J}Z_+(M)$ and we define ${\mathcal I}_+$ as the opposite of this complex structure.
\end{itemize}
\begin{thm}([E-S])\label{theorem: eells-salamon}
	Let $\Sigma$ be a Riemann surface, let $M$ be a Riemannian
	$4$-manifold, and
	let $f:\Sigma\longrightarrow M$ be a conformal harmonic map. The lifts
	$$\tilde{f}:\Sigma\longrightarrow Z_{\pm}(M)$$
	are pseudo-holomorphic for  the almost complex structures
	${\mathcal J}_+$ and
	${\mathcal J}_-$.
\end{thm}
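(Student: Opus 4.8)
The plan is to verify the pseudo-holomorphicity directly in adapted moving frames, reducing the statement to the minimal surface equation. The claim is local and $Z_-$ is handled exactly as $Z_+$ (replacing $e_3\wedge e_4$ by $-e_3\wedge e_4$), so I would work on the immersed locus of $f$ with a conformal coordinate $z=x+iy$ on $\Sigma$ and an oriented orthonormal frame $(e_1,e_2,e_3,e_4)$ of $f^*TM$ adapted so that $f_*\partial_x=\lambda e_1$ and $f_*\partial_y=\lambda e_2$ span the image tangent plane while $e_3,e_4$ span the normal plane. Under the identification \eqref{identification de la grassmannienne}, the $Z_+$-lift is then $\tilde f=\tfrac1{\sqrt2}(e_1\wedge e_2+e_3\wedge e_4)$, which we regard as the compatible complex structure $J$ with $Je_1=e_2$, $Je_3=e_4$. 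Pseudo-holomorphicity means $d\tilde f\circ j=\mathcal I_+\circ d\tilde f$, where $j$ is the complex structure of $\Sigma$ and $\mathcal I_+$ is the Eells--Salamon structure (denoted $\mathcal J_+$ in the statement).

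First I would split $d\tilde f$ into its horizontal and vertical components with respect to the connection splitting of $TZ_+(M)$ used in the definition of $\mathcal I_+$. The horizontal component is simply $df$ (since $\pi\circ\tilde f=f$), and the vertical component is the covariant derivative $\nabla\tilde f$, which takes values in the orthogonal complement $J^\perp\subset\Lambda^+(M)_p$, that is, in the vertical space $V_{p,J}Z_+(M)$. Checking that the horizontal part is complex linear is immediate and uses only conformality: because $\mathcal I_+$ acts on the horizontal space as $J=\tilde f$ itself, one has $\mathcal I_+\,df(\partial_x)=J(\lambda e_1)=\lambda e_2=df(\partial_y)=df(j\partial_x)$. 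This is precisely the statement that the image tangent plane is a $J$-complex line, which holds for every weakly conformal map and requires no harmonicity.

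The heart of the matter is the vertical part. Writing the connection $1$-forms $\omega_{ab}=\langle\nabla e_a,e_b\rangle$, a short computation of $\nabla(e_1\wedge e_2+e_3\wedge e_4)$ and projection onto $J^\perp$ gives
$$\big(\nabla(e_1\wedge e_2+e_3\wedge e_4)\big)^{J^\perp}=(\omega_{14}+\omega_{23})V_1+(\omega_{24}-\omega_{13})V_2,\quad V_1=e_1\wedge e_3-e_2\wedge e_4,\ V_2=e_1\wedge e_4+e_2\wedge e_3,$$
where $(V_1,V_2)$ frames the vertical space. The mixed forms $\omega_{i\alpha}$ ($i\in\{1,2\}$, $\alpha\in\{3,4\}$) are exactly the components of the second fundamental form $B$: one finds $\omega_{1\alpha}(\partial_x)=h^\alpha_{xx}/\lambda$, $\omega_{1\alpha}(\partial_y)=\omega_{2\alpha}(\partial_x)=h^\alpha_{xy}/\lambda$, and $\omega_{2\alpha}(\partial_y)=h^\alpha_{yy}/\lambda$, with $h^\alpha_{ij}=\langle B(\partial_i,\partial_j),e_\alpha\rangle$. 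Imposing the vertical pseudo-holomorphicity relation with the Eells--Salamon vertical structure, which is the reversal of the natural one, say $\mathcal I_+V_1=-V_2$ and $\mathcal I_+V_2=V_1$, and substituting the expressions above, the relation collapses to the two equations $h^\alpha_{xx}+h^\alpha_{yy}=0$ for $\alpha=3,4$, i.e. to the vanishing of the mean curvature. For a conformal map this is exactly harmonicity, so the lift is pseudo-holomorphic.

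I expect the main obstacle to be conceptual rather than computational: one must keep the sign conventions straight in order to see that it is the \emph{reversal} of the natural vertical complex structure, the defining feature of $\mathcal I_\pm$ as opposed to the Atiyah--Hitchin--Singer structure, that converts the minimal surface equation into the pseudo-holomorphic equation; with the unreversed structure the identical computation would yield an unrelated condition. Once this is pinned down, the remaining points are routine: the $Z_-$ case follows verbatim under $e_3\wedge e_4\mapsto-e_3\wedge e_4$, producing the same minimality condition, and at the branch points, where the adapted frame degenerates, pseudo-holomorphicity of the continuously extended lift follows by continuity from the immersed locus.
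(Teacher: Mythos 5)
There is nothing to compare against inside the paper: Theorem \ref{theorem: eells-salamon} is quoted from [E-S] and the paper gives no proof, only the definition of $\mathcal I_\pm$. Your proposal is therefore a self-contained proof of a cited result, and it is essentially correct; it is also, in substance, the classical moving-frames argument. I checked the computation: with $\omega_{ab}=\langle\nabla e_a,e_b\rangle$ one gets
\begin{equation*}
\nabla(e_1\wedge e_2+e_3\wedge e_4)=(\omega_{14}+\omega_{23})(e_1\wedge e_3-e_2\wedge e_4)+(\omega_{24}-\omega_{13})(e_1\wedge e_4+e_2\wedge e_3),
\end{equation*}
which is automatically self-dual and orthogonal to $J$, so your projection formula is exact; substituting $\omega_{i\alpha}=h^\alpha_{i\cdot}/\lambda$ and imposing $(d\tilde f)^V(\partial_y)=\mathcal I_+(d\tilde f)^V(\partial_x)$ with $\mathcal I_+V_1=-V_2$, $\mathcal I_+V_2=V_1$ does collapse to $h^3_{xx}+h^3_{yy}=0=h^4_{xx}+h^4_{yy}$, i.e.\ minimality, which for a conformal map is equivalent to harmonicity; and the horizontal part indeed uses only weak conformality. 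You also correctly isolate the conceptual point that the \emph{reversed} vertical structure is what makes this work, the unreversed (Atiyah--Hitchin--Singer) structure yielding a different (superminimality-type) condition.

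Two caveats, neither fatal. First, your ``say $\mathcal I_+V_1=-V_2$'' stipulates, rather than verifies, which of the two vertical rotations is the Eells--Salamon one; to close this you must fix the orientation of $\Lambda^+(M)_p$ (the paper's $V_{p,J}Z_+(M)$ inherits its orientation from there) and check that the opposite of the induced rotation is the one you used. With the standard orientation, for which $\bigl(e_1\wedge e_2+e_3\wedge e_4,\,V_1,\,V_2\bigr)$ is positive, your choice is the right one, so this is a bookkeeping step, but it is exactly the step on which the whole theorem hinges. Second, at branch points ``follows by continuity'' is too quick as stated: the Gauss lift is a priori only known to extend continuously ([G-O-R], [Gau]), and the equation $d\tilde f\circ j=\mathcal I_+\circ d\tilde f$ does not even make sense at a point where $\tilde f$ is not differentiable. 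You need either the regularity of the extended lift near a branch point (available from the local expansion (\ref{the branched immersion f})) or the removable-singularity theorem for pseudo-holomorphic curves. Both are standard, so these are gaps of exposition rather than of substance.
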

\begin{cor}
\label{eells-salamon sur la grassmannienne}
Putting together 
	${\mathcal J}^+$ and
	${\mathcal J}^-$ defines an almost complex structure ${\mathcal J}$ on $G_2^+(M)$; the lift of a minimal surface in $M$ to $G_2^+(M)$ is a  ${\mathcal J}$-holomorphic curve.
\end{cor}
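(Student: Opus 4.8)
The plan is to deduce the statement directly from the Eells--Salamon theorem (Theorem \ref{theorem: eells-salamon}) by exploiting the product decomposition (\ref{identification de la grassmannienne}). The whole argument should reduce to: (a) a formal definition of ${\mathcal J}$ on the product, (b) the identification of the Gauss lift with the pair of twistor lifts, and (c) the elementary fact that a map into a product of almost complex manifolds is pseudo-holomorphic if and only if each of its two components is.

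First I would define ${\mathcal J}$. Under the isomorphism $G_2^+(M)\cong Z_+(M)\times Z_-(M)$, a plane $P$ corresponds to the pair $(P_+,P_-)=\big(\frac{1}{\sqrt 2}(P+\star P),\frac{1}{\sqrt 2}(P-\star P)\big)$, and the tangent space splits as $T_{P_+}Z_+(M)\oplus T_{P_-}Z_-(M)$. I set ${\mathcal J}={\mathcal J}_+\oplus{\mathcal J}_-$ with respect to this splitting, i.e. the transport to $G_2^+(M)$ of the product almost complex structure. Since ${\mathcal J}_\pm^2=-\mathrm{id}$ on each factor, we get ${\mathcal J}^2=-\mathrm{id}$, so ${\mathcal J}$ is a genuine almost complex structure; this step is purely formal, as ${\mathcal J}$ is simply the pullback of ${\mathcal J}_+\times{\mathcal J}_-$ under the diffeomorphism (\ref{identification de la grassmannienne}).

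Next I would identify the Gauss lift with the pair of twistor lifts. Let $f:\Sigma\longrightarrow M$ be a minimal surface; equipping $\Sigma$ with the conformal structure induced by $f$ makes $f$ a conformal harmonic map. Its Gauss map sends $q\in\Sigma$ to the oriented tangent plane $P_q=df_q(T_q\Sigma)$, and under (\ref{identification de la grassmannienne}) this is exactly the pair $(\tilde f_+(q),\tilde f_-(q))$ with $\tilde f_\pm(q)=\frac{1}{\sqrt 2}(P_q\pm\star P_q)$, which are precisely the self-dual and anti-self-dual twistor lifts of $f$ into $Z_\pm(M)$. Thus the Gauss lift is the product map $(\tilde f_+,\tilde f_-)$. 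Writing this lift as $g$, the Cauchy--Riemann condition $dg\circ j={\mathcal J}\circ dg$ (with $j$ the complex structure of $\Sigma$) decomposes factor by factor into $d\tilde f_\pm\circ j={\mathcal J}_\pm\circ d\tilde f_\pm$. By Theorem \ref{theorem: eells-salamon} each $\tilde f_\pm$ is ${\mathcal J}_\pm$-holomorphic, so both conditions hold and $g$ is ${\mathcal J}$-holomorphic.

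The only genuine point to verify --- and the main, though mild, obstacle --- is the identification in the second step: that the Gauss map, read through the explicit formula in (\ref{identification de la grassmannienne}), really coincides with the two twistor lifts appearing in the Eells--Salamon construction, with matching orientation and sign conventions (in particular that the factor $Z_-(M)$, whose structure ${\mathcal J}_-$ reverses the orientation of $M$, is lifted via the anti-self-dual part $\frac{1}{\sqrt 2}(P-\star P)$ in a way consistent with their definition of ${\mathcal J}_-$). Once this bookkeeping is settled, the definition of ${\mathcal J}$ and the pseudo-holomorphicity of $g$ are immediate.
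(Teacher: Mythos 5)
Your proposal is correct and follows exactly the route the paper intends: the paper states this corollary without proof precisely because it is immediate from the product identification $G_2^+(M)\cong Z_+(M)\times Z_-(M)$ of (\ref{identification de la grassmannienne}) together with Theorem \ref{theorem: eells-salamon} applied to each factor. Your write-up simply makes explicit the three formal steps (product structure, Gauss lift as the pair of twistor lifts, componentwise pseudo-holomorphicity) that the paper leaves to the reader.
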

The following proposition shows the connection between the $Z_\pm(M)$'s and the quantities $k^T$ and $k^N$. 
\begin{prop}([E-S], [Vi1])\label{liens avec les fibres}
	Let $\Sigma$ be a $2$-surface immersed in   an oriented Riemannian $4$-manifold $M$ and let $T_\pm:\Sigma\longrightarrow Z_\pm(M)$ be its twistor lift.  
	$$
	-c_1(T_+^\star VZ_+)=c_1(T\Sigma)+ c_1(N\Sigma)\ \ \mbox{and} \ 
	-c_1(T_-^\star VZ_-)=c_1(T\Sigma)-c_1(N\Sigma)
$$
\end{prop}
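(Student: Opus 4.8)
The plan is to compute the pulled-back vertical line bundle fibrewise in an adapted frame and then read off its first Chern class from the way the residual structure group $U(1)\times U(1)$ (tangent and normal rotations) acts on it. I treat the $Z_+$ case in detail; the $Z_-$ case is identical with $\Lambda^-(M)$ replacing $\Lambda^+(M)$.

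First I would fix, along $\Sigma$, an oriented orthonormal frame $(e_1,e_2,e_3,e_4)$ of $TM$ with $(e_1,e_2)$ a positive frame of $T\Sigma$ and $(e_3,e_4)$ a positive frame of $N\Sigma$, chosen so that $(e_1,e_2,e_3,e_4)$ is positively oriented on $M$. Writing $e_{ij}=e_i\wedge e_j$, the bundle $\Lambda^+(M)|_\Sigma$ has the orthonormal frame
\begin{equation*}
\omega_1^+=\tfrac{1}{\sqrt2}(e_{12}+e_{34}),\qquad \omega_2^+=\tfrac{1}{\sqrt2}(e_{13}+e_{42}),\qquad \omega_3^+=\tfrac{1}{\sqrt2}(e_{14}+e_{23}).
\end{equation*}
By the definition of the twistor lift and the identification (\ref{identification de la grassmannienne}), $T_+$ sends $q\in\Sigma$ to $\tfrac{1}{\sqrt2}(P+\star P)$ with $P=e_{12}$, and since $\star e_{12}=e_{34}$ this is exactly the unit vector $\omega_1^+\in\mathbb S(\Lambda^+_q)$. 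Consequently the fibre of $T_+^\star VZ_+$ over $q$ is the orthogonal complement of $\omega_1^+$ in $\Lambda^+_q$, namely the real plane $\mathrm{span}(\omega_2^+,\omega_3^+)$ equipped with the Eells--Salamon complex structure $\mathcal I_+$.

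Next I would determine the transition law of this complex line. The frame is defined only up to a tangent rotation $R_\theta$ of $(e_1,e_2)$ and a normal rotation $R_\phi$ of $(e_3,e_4)$, and these two $U(1)$-actions are precisely the structures computing $c_1(T\Sigma)$ and $c_1(N\Sigma)$. Substituting the rotated frames into the formulas for $\omega_2^+,\omega_3^+$ — the routine linear-algebra step — shows that under both $R_\theta$ and $R_\phi$ the pair $(\omega_2^+,\omega_3^+)$ simply rotates, so that the complex coordinate $\omega_2^++i\,\omega_3^+$ is multiplied by $e^{-i(\theta+\phi)}$; in particular it is the \emph{sum} $\theta+\phi$ that appears. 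Since the positive frames of $T\Sigma$ and $N\Sigma$ transform by $e^{+i\theta}$ and $e^{+i\phi}$, this identifies $T_+^\star VZ_+$ with the complex line bundle $(T\Sigma\otimes N\Sigma)^{-1}$, whence $c_1(T_+^\star VZ_+)=-\bigl(c_1(T\Sigma)+c_1(N\Sigma)\bigr)$, the first assertion. Repeating the computation with the anti-self-dual frame $\omega_2^-,\omega_3^-$ produces instead the \emph{difference} $\theta-\phi$, because $\star$ reverses the sign of the $e_{34}$-type terms; this identifies $T_-^\star VZ_-$ with a line bundle on which $N\Sigma$ enters with the opposite sign, giving $-c_1(T_-^\star VZ_-)=c_1(T\Sigma)-c_1(N\Sigma)$.

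The one genuinely delicate point — and the step I would carry out most carefully — is the sign and orientation bookkeeping: matching $\mathcal I_\pm$ against the naive complex structure of the round sphere $\mathbb S(\Lambda^\pm_q)$, fixing the induced orientation of $N\Sigma$ (the one used in Remark \ref{changement de signe} and in the identification $\Lambda^2(N\Sigma)\cong\mathbb R$ of (\ref{normal to minimal})), and normalising $c_1$ so that $c_1(T\Sigma)=\chi(\Sigma)$. Everything structural is forced by the elementary computation above: the vertical fibre is $\mathrm{span}(\omega_2^\pm,\omega_3^\pm)$, and the combined tangent-plus-normal rotation acts through $\theta+\phi$ on the self-dual side and through $\theta-\phi$ on the anti-self-dual side, so that the $+$ fibre records $c_1(T\Sigma)+c_1(N\Sigma)=d_+$ and the $-$ fibre records $c_1(T\Sigma)-c_1(N\Sigma)=d_-$. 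Only the overall signs require the conventions just listed, and they are pinned down so as to be consistent with the Eells--Salamon framework of Theorem \ref{theorem: eells-salamon}.
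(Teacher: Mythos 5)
Your proof cannot be compared against a proof in the paper for a simple reason: the paper states Proposition \ref{liens avec les fibres} purely as a citation to [E-S] and [Vi1] and gives no argument. Your frame computation is the standard way to establish it, and its core checks out: with $\omega_1^+=\tfrac{1}{\sqrt2}(e_{12}+e_{34})$ the twistor lift is exactly $\omega_1^+$, the vertical fibre is $\mathrm{span}(\omega_2^+,\omega_3^+)$, and a direct computation confirms that a simultaneous tangent rotation by $\theta$ and normal rotation by $\phi$ rotates the pair $(\omega_2^+,\omega_3^+)$ by $\theta+\phi$, while the anti-self-dual pair $(\omega_2^-,\omega_3^-)$ rotates by $\theta-\phi$. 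That is precisely the mechanism that makes the sum $c_1(T\Sigma)+c_1(N\Sigma)$ appear on the $Z_+$ side and the difference on the $Z_-$ side, so the structural content of the proposition is fully established by your argument.

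One step in your bookkeeping is not internally consistent, and it happens to be where the minus sign of the proposition lives. You assert that $\omega_2^++i\,\omega_3^+$ picks up $e^{-i(\theta+\phi)}$ while the frames of $T\Sigma$ and $N\Sigma$ pick up $e^{+i\theta}$ and $e^{+i\phi}$; but with a single fixed convention, $e_1+ie_2$ picks up $e^{-i\theta}$ under the same rotation, i.e.\ all three frames transform with the same sign pattern. So this comparison, by itself, identifies the vertical bundle \emph{with its natural fibre orientation} with $T\Sigma\otimes N\Sigma$, not with its inverse. The inverse --- hence the stated equality $-c_1(T_+^\star VZ_+)=c_1(T\Sigma)+c_1(N\Sigma)$ --- comes exactly from the point you flag but defer: in the Eells--Salamon construction recalled in the paper, $\mathcal I_+$ is defined on $V_{p,J}Z_+(M)$ as the \emph{opposite} of the complex structure induced by the metric and orientation of $\Lambda^+(M)_p$, and conjugating the complex structure of a line bundle replaces it by its inverse. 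This reversal is not an adjustable convention to be ``pinned down so as to be consistent''; it is a definite ingredient of the definition, and making that one step explicit closes the only gap in an otherwise correct proof.
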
 
\subsection{Bubbling off of a current}
Let $\Sigma$ be a surface in $M$ with second fundamental form $B$. Let $p\in\Sigma$ and let $e_1,e_2$ (resp. $e_3,e_4$) be a positive orthonormal basis of $T_p\Sigma$ (resp. $N_p\Sigma$). The tangent space to $G_2^+(M)$ at $e_1\wedge e_2$ is generated by $e_3\wedge e_4$ and $e_1\wedge e_j$, $e_2\wedge e_j$ for $j=3,4$. If $u$ is a unit vector in $T_p\Sigma$, and $j=3,4$
$$|\langle \nabla_u(e_1\wedge e_2), e_1\wedge e_j\rangle|=|\langle \nabla_u e_2,e_j\rangle|=|\langle B(u, e_2),e_j \rangle|\leq \|B\|.$$
Similarly $|\langle \nabla_u(e_1\wedge e_2), e_2\wedge e_j\rangle|\leq \|B\|$ and finally
$\langle \nabla_u(e_1\wedge e_2), e_3\wedge e_4\rangle=0.$
Thus bounds for the area of $\Sigma$ and for the $L^2$ norm of $B$ give us a bound for the area of the lift of $\Sigma$ in $G_2^+(M)$. We derive 
\begin{thm}\label{theorem: current}
	Let $M$, $(\Sigma_n)$, $\Sigma_0$ verifying  Assumption \ref{basic assumption}. Suppose  that 
	the $\Sigma^\epsilon_n$'s have common bounds for the area and for the $L^2$ norm of the second fundamental form. For $\epsilon,n$, we let 
	$\check{\Sigma}_n^\epsilon$ be the lift in  $G_2^+(M)$ of 
	$\Sigma_n^\epsilon$.\\
1)	There exists a closed $2$-current $C$ in $G_2^+(T_pM)$  such that:\\
	for every $\epsilon>0$, the sequence  $(\check{\Sigma}_n^\epsilon)$
	converges in the sense of currents and
	$$\lim_{n\longrightarrow\infty}\check{\Sigma}_n^\epsilon
	=\check{\Sigma}_0^\epsilon+C.$$
	2) We denote by $s_\pm$ the $2$-homology class of $G_2^+(\mathbb{R}^4)$ corresponding to the factor $Z_{\pm}(M)$ in $G_2^+(T_pM)=Z_+(M)_p\times Z_-(M)_p$ (cf. (\ref{identification de la grassmannienne}) above). Then 
	\begin{equation}
-[C]=(k^T+k^N)s_++(k^T-k^N)s_-
	\end{equation}
\end{thm}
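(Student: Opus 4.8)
The plan is to obtain $C$ in 1) from the Federer--Fleming compactness theorem and then to pin down its homology class in 2) by integrating the curvatures of the vertical bundles $VZ_\pm$ over $C$, using the form-level version of Proposition \ref{liens avec les fibres}.

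For 1), the computation preceding the theorem already bounds the mass of each lift $\check\Sigma_n^\epsilon$ in $G_2^+(M)$ by the area of $\Sigma_n^\epsilon$ and the $L^2$-norm of its second fundamental form, both of which are uniformly bounded; the boundaries $\partial\check\Sigma_n^\epsilon$ are lifts of the curves $\partial\Sigma_n^\epsilon$ and so have uniformly bounded length. The $\check\Sigma_n^\epsilon$ are therefore integral $2$-currents with uniformly bounded mass and boundary mass, and Federer--Fleming yields a weakly convergent subsequence with integral limit $T_\epsilon$. On the complement of the fibre $G_2^+(T_pM)=\pi^{-1}(p)$ of $\pi\colon G_2^+(M)\to M$, the convergence $\Sigma_n\to\Sigma_0$ is smooth, so the Gauss lifts converge smoothly and $T_\epsilon=\check\Sigma_0^\epsilon$ there. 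Hence $C:=T_\epsilon-\check\Sigma_0^\epsilon$ is an integral current supported in the compact fibre $G_2^+(T_pM)$. Because the boundary operator is continuous under weak convergence and $\partial\check\Sigma_n^\epsilon\to\partial\check\Sigma_0^\epsilon$ (these curves stay away from $p$), we get $\partial T_\epsilon=\partial\check\Sigma_0^\epsilon$, so $\partial C=0$ and $C$ is a cycle. Comparing two radii $\epsilon'<\epsilon$ shows that the intermediate annulus converges smoothly and carries no concentrated mass, so $C$ does not depend on $\epsilon$.

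For 2), use the isomorphism (\ref{identification de la grassmannienne}) to write $G_2^+(T_pM)\cong Z_+(T_pM)\times Z_-(T_pM)\cong S^2\times S^2$, so that $H_2(G_2^+(T_pM))=\mathbb{Z}s_+\oplus\mathbb{Z}s_-$ and $[C]=a\,s_++b\,s_-$. I would read off $a$ and $b$ by pairing $[C]$ with $c_1(VZ_+)$ and $c_1(VZ_-)$, which are pulled back from the two factors. The essential input is Proposition \ref{liens avec les fibres} at the level of curvature forms: along the Gauss lift the curvature of $VZ_+$ (resp. $VZ_-$) pulls back to $-(\Omega^T_n+\Omega^N_n)$ (resp. $-(\Omega^T_n-\Omega^N_n)$) on $\Sigma_n^\epsilon$. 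Integrating, letting $n\to\infty$ and then $\epsilon\to0$, the definitions of $k^T$ and $k^N$ give
\[
\frac{1}{2\pi}\lim_{\epsilon\to0}\lim_{n\to\infty}\int_{\check\Sigma_n^\epsilon}\mathrm{curv}(VZ_\pm)=-(k^T\pm k^N).
\]
The limit surface contributes nothing: near $p$ each branched disk is, to leading order, a flat $N_i$-fold cover of the plane $P_i$, so its Gauss image shrinks to the point $P_i$ and the integral of any smooth form over $\check\Sigma_0^\epsilon$ tends to $0$. Subtracting, $\langle c_1(VZ_\pm),[C]\rangle=-(k^T\pm k^N)$, and since $c_1(VZ_+),c_1(VZ_-)$ form the basis of $H^2$ dual to $s_+,s_-$, this is exactly $-[C]=(k^T+k^N)s_++(k^T-k^N)s_-$.

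The step I expect to be most delicate is promoting Proposition \ref{liens avec les fibres}, stated as an identity of Chern \emph{numbers} of closed surfaces, to the form-level identity used above on the surfaces-with-boundary $\Sigma_n^\epsilon$, and justifying that the two iterated limits may be taken with no loss of curvature away from the fibre. A secondary point is to upgrade the Federer--Fleming subsequence in 1) to convergence of the whole sequence: here the fact that the curvature pairings above determine $[C]$ independently of the subsequence forces all weak limits into the same homology class, and in the minimal case the pseudo-holomorphicity of Corollary \ref{eells-salamon sur la grassmannienne}, together with Gromov compactness, provides the rigidity that identifies $C$ uniquely as a $\mathcal{J}$-holomorphic, hence complex, curve.
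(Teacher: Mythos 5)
Your proposal follows the same route as the paper's (very terse) proof: your part 1) is exactly the paper's definition of $C$ as the limit of $\check\Sigma_n^\epsilon-\check\Sigma_0^\epsilon$, made rigorous via Federer--Fleming and the mass bound computed just before the theorem, and your part 2) is the paper's one-line ``follows from Proposition \ref{liens avec les fibres}'' carried out explicitly by pairing $[C]$ against $c_1(VZ_\pm)$. Part 1) as you write it is complete and correct, and is a genuine improvement in detail over the paper.

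In part 2), however, the final step is wrong as stated: $c_1(VZ_+)$, $c_1(VZ_-)$ are \emph{not} the basis of $H^2$ dual to $s_+,s_-$. The vertical bundle $VZ_+$ restricted to a twistor fibre $(Z_+)_p\cong\mathbb{S}^2$ is the tangent bundle of that fibre, so $\langle c_1(VZ_+),s_+\rangle=\pm 2$ (the Euler number of $\mathbb{S}^2$), not $1$, and $\langle c_1(VZ_+),s_-\rangle=0$; likewise for $VZ_-$. With the correct pairing, your (otherwise sound) computation $\langle c_1(VZ_\pm),[C]\rangle=-(k^T\pm k^N)$ gives $-[C]=\frac{1}{2}(k^T+k^N)s_++\frac{1}{2}(k^T-k^N)s_-$, i.e.\ half the coefficients in the theorem's display. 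A concrete test is the cusp family $z_1^3-z_2^2=\frac{1}{n}$ in $\mathbb{C}^2$: there $k^T=-3$, $k^N=3$, so $k^T-k^N=-6$, while solving $3z_1^2=2tz_2$ on the curve shows that near the origin the negative Gauss maps cover a generic point of the $Z_-$ fibre exactly $3$ times, so $C$ is $\pm 3$ times the fibre sphere class, not $\pm 6$ times it. So either the paper's $s_\pm$ are implicitly normalized so that $\langle c_1(VZ_\pm),s_\pm\rangle=1$ (half the fibre class), in which case your duality assertion is what the paper intends but must be stated as a convention, not as a fact about the sphere classes; or $s_\pm$ are the fibre classes, and then both your conclusion and the theorem's displayed formula carry a spurious factor of $2$. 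This normalization is precisely the delicate point on which the coefficients hinge, and your proof papers over it; the remaining ingredients of your argument (the form-level version of Proposition \ref{liens avec les fibres}, the vanishing of the $\check\Sigma_0^\epsilon$ contribution as $\epsilon\to 0$, the $\epsilon$-independence of $C$) are correct.
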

\begin{proof}
1)	We define $C$ as the limit of $\hat{\Sigma}_n^\epsilon-\hat{\Sigma}_0^\epsilon$; it exists because of the bounds on the area and it is closed because $\partial(\hat{\Sigma}_n^\epsilon-\hat{\Sigma}_0^\epsilon)$ tend to $0$ as $n$ tends to infinity.\\
2) follows from  Proposition \ref{liens avec les fibres}. 
\end{proof}
\subsection{Minimal surfaces}
If the $\Sigma_n$'s are minimal, the current $C$ is actually a complex curve.
Corollary \ref{eells-salamon sur la grassmannienne} enables us to restate  
Theorem \ref{theorem: current}.
\begin{thm}\label{theorem: current in the minimal case}
	Let $M$, $(\Sigma_n)$ and $\Sigma_0$ be as in Assumption \ref{basic assumption} and suppose moreover that the $\Sigma_n$'s are minimal. There exists a
	complex curve $S$ cohomologous to the current $C$ of Theorem \ref{theorem: current} such that,
	for every $\epsilon>0$ small enough, 
	$$\lim_{n\longrightarrow\infty}\tilde{\Sigma}_n^\epsilon
	=\tilde{\Sigma}_0^\epsilon\cup S$$
	where the limit means: convergence of
	pseudo-holomorphic curves with boundary
	in the (Gromov) sense of cusp-curves.
In particular, the $\tilde{\Sigma}_n^\epsilon$'s converge to 
	$\tilde{\Sigma}_0^\epsilon\cup S$ in the Hausdorff sense; if the $\tilde{\Sigma}_n^\epsilon$'s are connected, then 
	$\tilde{\Sigma}_0^\epsilon\cup S$ is also connected.
\end{thm}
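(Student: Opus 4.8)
The plan is to upgrade the current-theoretic convergence of Theorem \ref{theorem: current} to Gromov convergence of pseudo-holomorphic curves, the whole point being that minimality turns every lift into a $\mathcal J$-holomorphic curve. First I would record that, by Corollary \ref{eells-salamon sur la grassmannienne}, each $\tilde\Sigma_n^\epsilon$ is a $\mathcal J$-holomorphic curve with boundary in $G_2^+(M)$, where $\mathcal J$ is smooth because $M$ is. The hypotheses of Theorem \ref{theorem: current} supply a uniform bound on the area of $\tilde\Sigma_n^\epsilon$, hence (for a $\mathcal J$-holomorphic curve area equals energy) a uniform energy bound, while the Main Assumption supplies a uniform genus bound. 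Moreover, since $\Sigma_n\to\Sigma_0$ uniformly smoothly on $B(p,\epsilon)\setminus\{p\}$, for generic $\epsilon$ the boundaries $\partial\tilde\Sigma_n^\epsilon$ converge smoothly to $\partial\tilde\Sigma_0^\epsilon$.

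With these bounds in hand, I would apply Gromov's compactness theorem for pseudo-holomorphic curves with smoothly converging boundary. Passing to a subsequence, the $\tilde\Sigma_n^\epsilon$ converge in the sense of cusp-curves to a connected $\mathcal J$-holomorphic cusp-curve. Away from the branch point, the smooth convergence $\Sigma_n\to\Sigma_0$ lifts to smooth convergence of Gauss maps, so over $B(p,\epsilon)\setminus\{p\}$ the limit is exactly the lift $\tilde\Sigma_0^\epsilon$ of the branched minimal surface $\Sigma_0$; this lift is itself $\mathcal J$-holomorphic, the Gauss map extending continuously across $p$ by \S\ref{definition of branched immersions}. Hence $\tilde\Sigma_0^\epsilon$ is the principal component of the cusp-curve, and all the remaining energy bubbles off over the single point $p$.

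The bubble components therefore lie in the fibre $G_2^+(T_pM)\cong Z_+(M)_p\times Z_-(M)_p\cong \mathbb{C}P^1\times\mathbb{C}P^1$, on which $\mathcal J$ restricts to the genuine, integrable product complex structure; consequently the bubble locus $S$ is an honest complex curve. This yields the asserted convergence $\lim_n\tilde\Sigma_n^\epsilon=\tilde\Sigma_0^\epsilon\cup S$ in the sense of cusp-curves. To see that $S$ is cohomologous to the current $C$, I would compare the two limits: the underlying current of the Gromov limit is precisely $\lim_n\check\Sigma_n^\epsilon$, which by Theorem \ref{theorem: current} equals $\check\Sigma_0^\epsilon+C$; subtracting the principal part $\check\Sigma_0^\epsilon$ identifies $[S]=[C]$. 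The two ``in particular'' assertions are then formal: cusp-curve convergence implies Hausdorff convergence, and a Gromov limit of connected curves is connected, so $\tilde\Sigma_0^\epsilon\cup S$ is connected whenever the $\tilde\Sigma_n^\epsilon$ are.

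The main obstacle I anticipate is the clean application of Gromov compactness in this boundary setting together with the verification that energy concentrates only at $p$ and nowhere along $\partial B(p,\epsilon)$: one must rule out boundary bubbling and confirm that the principal component is exactly $\tilde\Sigma_0^\epsilon$, with no extra multiplicity, so that the bubble homology matches $C$ on the nose rather than up to an unknown correction. Here the $L^2$ bound on the second fundamental form is precisely what confines the curvature concentration to the branch point and forces all bubbling into the fibre over $p$.
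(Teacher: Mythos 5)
Your proposal is correct and follows essentially the same route as the paper: invoke Eells--Salamon (Corollary \ref{eells-salamon sur la grassmannienne}) to make the lifts $\mathcal J$-holomorphic, use the area and genus bounds to apply Gromov compactness for pseudo-holomorphic curves with boundary (the paper cites Theorem 1 of [I-S] for exactly this), and then use smooth convergence away from $p$ to identify the limit with $\tilde\Sigma_0^\epsilon$ plus a bubble $S$ confined to the fibre $G_2^+(T_pM)\cong\mathbb{C}P^1\times\mathbb{C}P^1$, where integrability makes $S$ an honest complex curve. Your treatment of the homology identification $[S]=[C]$ and of boundary bubbling is in fact more explicit than the paper's.
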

\begin{proof} The $\tilde{\Sigma}_n^\epsilon$'s are
pseudo-holomorphic curves with boundary in the pseudo-Hermitian manifold with boundary $G_2^+(M\cap\overline{\mathbb{B}}(p,\epsilon))$ ; their areas and genera are bounded. They satisfy the assumptions of Theorem 1 of [I-S] so they converges in the Gromov sense to a pseudo-holomorphic curve $S^\epsilon$. Since the $\Sigma_n$'s converge uniformely smoothly on compact sets not containing $p$, $S^\epsilon$ coincides with $\tilde{\Sigma}_0^\epsilon$ above $M-\{p\}$.
\end{proof} 
\begin{ques*} We prove the existence of the complex curve $S$ in $\mathbb{C}P^1\times\mathbb{C}P^1$ but we end up only using the information of its homology class; is it possible to use finer information on $S$ to shed light the convergence of the $\Sigma_n$'s?
	\end{ques*}
   
\section{The equality case $|k^N|=-k^T$}\label{paragraph on equality}
Here are two generalisations of complex curves where we still have equality between the fallouts.
\subsection{Superminimal surfaces}
Superminimal surfaces are the closest Riemannian analogue to complex curves in K\"ahler surfaces (see [Gau] for details) and their branch points are $C^1$ equivalent to branch points of a complex curve in $\mathbb{C}^2$ ([Vi2]).\\
A possibly branched surface $\Sigma$ immersed  in an oriented Riemannian $4$-manifold $M$ is  {\it right superminimal} (resp.
{\it left superminimal}) if its lift $J_+$ (resp. $J_-$) in $Z_+(M)$
(resp. $Z_-(M)$) is parallel w.r.t the connection induced by the Levi-Civita connection on $M$. Equivalently, the second fundamental form $B$ of $\Sigma$ is linear w.r.t. $J_+$ (resp. $J_-$) so the formulae in \S \ref{preuve des surfaces minimales} tell us that $\Omega^T+\Omega^N=0$ (resp. $\Omega^T-\Omega^N=0$). This proves 
\begin{prop}
	Let  $M$, $(\Sigma_n)$, $\Sigma_0$ and $p$ be as in Assumption \ref{basic assumption} and suppose 
	that the $\Sigma_n$'s are right (resp. left) superminimal. 
	Then 
	$$k^T=-k^N\ \ \ (resp.\ k^T=k^N).$$
\end{prop}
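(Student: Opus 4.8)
The plan is to reduce the statement to an exact pointwise identity between the integrands defining $k^T$ and $k^N$, so that no delicate interchange of limits is needed. Since a superminimal surface is in particular minimal, each $\Sigma_n$ satisfies the minimal curvature formulae (\ref{tangent to minimal}) and (\ref{normal to minimal}), and by the tensoriality remark of \S \ref{curvature formulae} I may compute both fallouts using the flat Euclidean metric on $T_pM$, for which the ambient curvature terms $\langle R^M(e_1,e_2)e_3,e_4\rangle$ drop out entirely.

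Working in a positive orthonormal frame $(e_1,e_2)$ of $T_q\Sigma_n$ and $(e_3,e_4)$ of $N_q\Sigma_n$, I would write $a=B(e_1,e_1)=-B(e_2,e_2)$ and $b=B(e_1,e_2)$, so that (\ref{tangent to minimal}) and (\ref{normal to minimal}) become $\Omega^T_n=-(\|a\|^2+\|b\|^2)$ and $\Omega^N_n=2\,a\wedge b$, the latter read as a real number via the chosen orientation of $N_q\Sigma_n$. The next step is to feed in superminimality. Right superminimality says the lift $J_+$ is parallel, equivalently that $B$ is complex linear for $J_+$; concretely this forces $b=J^N a$, where $J^N$ denotes the rotation by $+\pi/2$ in $N_q\Sigma_n$ fixed by the orientation. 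Substituting gives $\|b\|=\|a\|$ and $a\wedge b=\|a\|^2$, hence
$$\Omega^T_n+\Omega^N_n=-(\|a\|^2+\|b\|^2)+2\,a\wedge b=0$$
at every point of every $\Sigma_n$. In the left superminimal case one has instead $b=-J^N a$, so $a\wedge b=-\|a\|^2$ and the same computation yields $\Omega^T_n-\Omega^N_n=0$ pointwise.

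It then remains only to integrate and pass to the limit. Because the identity above is an exact equality valid at each point for each fixed $n$ and $\epsilon$, integrating over $\Sigma_n^\epsilon$ gives $\int_{\Sigma_n^\epsilon}(\Omega^T_n+\Omega^N_n)=0$ on the nose in the right superminimal case (and $\int_{\Sigma_n^\epsilon}(\Omega^T_n-\Omega^N_n)=0$ in the left case). Since $\int_{\Sigma_n^\epsilon}\Omega^T_n$ and $\int_{\Sigma_n^\epsilon}\Omega^N_n$ are exactly the quantities whose normalized double limits define $k^T$ and $k^N$, dividing by $2\pi$ and applying $\lim_{\epsilon\to 0}\lim_{n\to\infty}$ yields $k^T=-k^N$ (resp.\ $k^T=k^N$). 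I do not expect a genuine obstacle: the only delicate point is the passage from the Riemannian metric $g$ to the flat model, which is precisely the tensoriality-plus-shrinking-area observation of \S \ref{curvature formulae}, guaranteeing that the $R^M$-terms contribute nothing to the fallouts. Note also that only the $\Sigma_n$ need be superminimal; the limit $\Sigma_0$ plays no role in the argument.
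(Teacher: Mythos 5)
Your proposal is correct and follows essentially the same route as the paper: the paper likewise invokes the equivalence of superminimality with complex linearity of $B$ with respect to $J_\pm$, feeds this into the minimal-surface curvature formulae of \S \ref{curvature formulae} to get the pointwise vanishing of $\Omega^T\pm\Omega^N$ (up to the $R^M$ terms, which are negligible for the fallouts), and concludes by integrating and passing to the double limit. Your write-up merely makes explicit the algebra ($b=\pm J^N a$, hence $\|b\|=\|a\|$ and $a\wedge b=\pm\|a\|^2$) that the paper leaves to the reader.
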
 
\subsection{Symplectic curves}
\begin{prop}\label{autres cas d'égalite}
	Let $(\Sigma_n)$, $\Sigma_0$ be as in Assumption \ref{basic assumption} and assume that the $\Sigma_n$'s are symplectic for a symplectic structure $\omega$ in a neighbourhood of $p$.\\ If 
	$\omega\wedge \omega>0$ (resp. $\omega\wedge \omega<0$), then
	$$k^N+k^T=0\ \ \ \ \ \mbox{(resp.}\ \ \ \ \ k^N-k^T=0)$$
	
\end{prop}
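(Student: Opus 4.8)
The plan is to identify $k^{T}+k^{N}$ with a relative first Chern number of the vertical bundle $VZ_{+}$ along the positive twistor lift, and then to show that the symplectic hypothesis forces this number to vanish because the lift misses a fixed point of each fibre $S^{2}$. First I would invoke Proposition \ref{liens avec les fibres}: up to sign, $\Omega^{T}_{n}+\Omega^{N}_{n}$ represents $T_{+}^{\star}$ of the curvature of the complex line bundle $VZ_{+}$, where $T_{+}\colon\Sigma_{n}^{\epsilon}\to Z_{+}(M)$ is the positive twistor lift. Hence
\[
\frac{1}{2\pi}\int_{\Sigma_{n}^{\epsilon}}(\Omega^{T}_{n}+\Omega^{N}_{n})=-\frac{1}{2\pi}\int_{\Sigma_{n}^{\epsilon}}T_{+}^{\star}\Omega^{VZ_{+}}
\]
is, up to sign, the relative first Chern number of $T_{+}^{\star}VZ_{+}$ over the surface-with-boundary $\Sigma_{n}^{\epsilon}$. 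The strategy is to exhibit a nowhere-vanishing section of this bundle, so that Lemma \ref{lemme sur stokes} collapses the integral to a boundary term, and then to make that boundary term tend to $0$ exactly as in the proof of Proposition \ref{formule pour le normal fallout}.

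The geometric heart is to locate the fibre image of $T_{+}$. I would decompose $\omega=\omega^{+}+\omega^{-}$ with $\omega^{\pm}\in\Lambda^{\pm}(M)$, so that $\omega\wedge\omega=(\|\omega^{+}\|^{2}-\|\omega^{-}\|^{2})\,\mathrm{vol}$; thus $\omega\wedge\omega>0$ means $\|\omega^{+}_{p}\|>\|\omega^{-}_{p}\|$, and by continuity $\|\omega^{+}\|>\|\omega^{-}\|$ on a small ball $B(p,\epsilon)$. For an $\omega$-positive oriented plane $P$ with $Z_{\pm}$-components $u=\tfrac{1}{\sqrt{2}}(P+\star P)$ and $v=\tfrac{1}{\sqrt{2}}(P-\star P)$ as in (\ref{identification de la grassmannienne}), the positivity $\langle\omega,P\rangle>0$ reads $\langle\omega^{+},u\rangle+\langle\omega^{-},v\rangle>0$. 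Writing $u_{0}=\omega^{+}/\|\omega^{+}\|$ and using $\|v\|=1$, this forces $\langle u_{0},u\rangle>-\|\omega^{-}\|/\|\omega^{+}\|>-1$. Consequently the $Z_{+}$-component of every tangent plane to $\Sigma_{n}$ in $B(p,\epsilon)$ stays outside a spherical cap around $-u_{0}$; since $u_{0}$ varies continuously and $\|\omega^{+}_{p}\|>\|\omega^{-}_{p}\|$ strictly, for $\epsilon$ small all these caps are contained in a single proper cap $K_{\epsilon}\subsetneq S^{2}$ around $-u_{0}(p)$.

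Then I would conclude as follows. The fibre image of $T_{+}(\Sigma_{n}^{\epsilon})$ lies in the contractible region $S^{2}\setminus K_{\epsilon}$, over which $VZ_{+}$ (fibrewise the tangent bundle of $S^{2}$) is trivial and so admits a nowhere-zero section $s$; pulling back gives a nowhere-zero section of $T_{+}^{\star}VZ_{+}$ on all of $\Sigma_{n}^{\epsilon}$. Applying Lemma \ref{lemme sur stokes} with no interior zeros yields
\[
\frac{1}{2\pi}\int_{\Sigma_{n}^{\epsilon}}(\Omega^{T}_{n}+\Omega^{N}_{n})=-\frac{1}{2\pi}\int_{\partial\Sigma_{n}^{\epsilon}}\omega_{s}.
\]
The components of $\partial\Sigma_{n}^{\epsilon}$ tend to great circles bounding flat disks in $\mathbb{R}^{4}$, along which the twistor lift becomes asymptotically parallel, so the boundary term tends to $0$ as $n\to\infty$ and $\epsilon\to0$, exactly as in (\ref{limite des omega}). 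Therefore $k^{T}+k^{N}=0$. The case $\omega\wedge\omega<0$ is identical with $Z_{-}$ and $v$ in place of $Z_{+}$ and $u$, giving $k^{T}-k^{N}=0$.

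The main obstacle is the uniform cap-avoidance as $\epsilon\to0$: because $\omega$, hence $u_{0}$ and the ratio $\|\omega^{-}\|/\|\omega^{+}\|$, varies over $B(p,\epsilon)$, one must check that the admissible caps shrink onto a single proper cap around $-u_{0}(p)$, which is exactly where the strict inequality $\|\omega^{+}_{p}\|>\|\omega^{-}_{p}\|$ is used; the second delicate point is the vanishing of the boundary integral, which I expect to follow from the same degeneration analysis as for the normal fallout. As a cross-check, if the second fundamental forms of the $\Sigma_{n}$'s carry a common $L^{2}$ bound, the same conclusion is immediate from Theorem \ref{theorem: current}: the support of the bubble current $C$ consists of $\omega_{p}$-nonnegative planes, whose $Z_{+}$-projection (resp.\ $Z_{-}$-projection when $\omega\wedge\omega<0$) lands in a contractible cap and so carries no $s_{+}$ (resp.\ $s_{-}$), forcing $k^{T}+k^{N}=0$ (resp.\ $k^{T}-k^{N}=0$).
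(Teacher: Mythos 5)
Your proof is correct, and it shares the paper's central geometric insight --- symplectic positivity traps the positive twistor lift in a contractible subset of each fibre of $Z_+(M)$ --- but the implementation is genuinely different. The paper's own proof first invokes Darboux's theorem together with the metric-independence of $k^T,k^N$ (\S\ref{curvature formulae}) to replace the ambient metric by a flat one in which a Darboux frame is orthonormal; then $\omega$ is parallel and self-dual (so $\omega^-=0$), it corresponds to a parallel complex structure $H\in Z_+$, positivity of a tangent plane says exactly that its $Z_+$-component lies in the open hemisphere around $H(q)$, and the conclusion is drawn homotopically: $\tilde\Sigma_n^\epsilon$ is homotopic to the section $\{H(q)\}$, so the bubbled-off current $C$ of Theorem \ref{theorem: current} is homologically trivial and $k^T+k^N=0$. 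You instead keep the given metric and pay for it with the self-dual/anti-self-dual splitting $\omega=\omega^++\omega^-$ and the estimate $\langle u_0,u\rangle>-\|\omega^-\|/\|\omega^+\|>-1$, which yields avoidance of a cap rather than of a hemisphere and requires the uniformity discussion as $\epsilon\to 0$ that you correctly flag and correctly resolve; and you conclude not with the current but with Lemma \ref{lemme sur stokes}, pulling back a nowhere-zero section of $VZ_+$ over the contractible region and killing the boundary term as in (\ref{limite des omega}). Each route buys something. The paper's is shorter, and the parallel structure $H$ makes the fibre picture cleaner; but, as written, it passes through the current $C$, whose construction in Theorem \ref{theorem: current} formally assumes a common $L^2$ bound on the second fundamental forms that the present proposition does not grant. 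Your argument never needs that bound (and your closing cross-check shows it reduces to the paper's argument precisely when the bound is available), so it is the more self-contained of the two; conversely, borrowing the paper's Darboux normalization (which forces $\omega^-=0$ and makes $u_0$ parallel) would simplify your cap estimate further, so the two proofs combine profitably.
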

\begin{proof}
	Darboux's theorem yields the existence of local coordinates $(x_1,y_1,x_2,y_2)$ such that $\omega=\sum_{i=1,2}dx_i\wedge dy_i$. The quantities $k^N,k^T$ do not depend on the metric so we compute them for a local  metric where $(\frac{\partial}{\partial x_1},\frac{\partial}{\partial y_1}, \frac{\partial}{\partial x_2},\frac{\partial}{\partial y_2})$ is an orthonormal basis.  We define a parallel complex structure $H$ by setting $H(\frac{\partial}{\partial x_i})=\frac{\partial}{\partial y_i}$; $H$ belongs to $Z_+$ (resp. $Z_-$) if $\omega\wedge \omega>0$ (resp. $\omega\wedge \omega<0$). For a $q$ in $\Sigma_n$, if we set $H(q)$ to be the North Pole in $(Z_{\pm})_q$, the lift of $q$ in $\tilde{\Sigma}_n^\epsilon$ is in the upper half-sphere containing $H(q)$. Hence $\tilde{\Sigma}_n^\epsilon$ is homotopically equivalent to $\{H(q)\slash q\in\Sigma_n\}$ and the limiting current $C$ is homologically trivial.
\end{proof}

We now investigate the equality cases in Theorem \ref{proposition: inegalite} and show that they have topological implications. 
\subsection{Equality case for braids}
\begin{prop}\label{premiere proposition sur l'egalite} Assume that 
	\begin{enumerate}
		\item $L=\partial\Sigma_0$ is presented as a braid (cf. Theorem \ref{quand le bord est une tresse})
		\item $|k^N|=k^T$
	\end{enumerate}
	Then, for $n$ large enough, $\epsilon$ small enough, $$\chi(\Sigma_n^\epsilon)=\chi_s(L)$$
	where $\chi_s(L)$ is the largest Euler characteristic of a surface in $\mathbb{B}^4$ bounded by $L$. 
\end{prop}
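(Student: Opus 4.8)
The plan is to establish the two opposite inequalities $\chi(\Sigma_n^\epsilon)\leq\chi_s(L)$ and $\chi_s(L)\leq\chi(\Sigma_n^\epsilon)$ and read off the equality; the first holds because $\Sigma_n^\epsilon$ itself bounds $L$, and the second comes from Rudolph's slice Bennequin inequality once the equality hypothesis is fed in. I read hypothesis (2) as the equality case $-k^T=|k^N|$ of the main inequality (\ref{inegalite principale}). Write $s=\sum_{i=1}^m N_i$ for the number of strands of the braid $L=\partial\Sigma_0$ produced by Theorem \ref{quand le bord est une tresse}. Reversing the orientation of $\mathbb{R}^4$ leaves $\chi(\Sigma_n^\epsilon)$, $k^T$ and $s$ unchanged, sends $k^N$ to $-k^N$, and replaces $L$ by its mirror $\bar L$; since an orientation-reversing diffeomorphism of $\mathbb{B}^4$ carries an embedded surface bounding $L$ to one bounding $\bar L$ with the same Euler characteristic, $\chi_s(\bar L)=\chi_s(L)$. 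Hence I may assume without loss of generality that $k^N\geq 0$, so that $|k^N|=k^N$ and the hypothesis reads $k^T=-k^N$.

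I would then assemble three facts, valid for $n$ large and $\epsilon$ small, so that $\chi(\Sigma_n^\epsilon)$ has stabilized and $\partial\Sigma_n^\epsilon$ is isotopic in $\mathbb{S}(p,\epsilon)$ to $L$. First, the identity from (\ref{rudolph applique au k}) inside the proof of Theorem \ref{quand le bord est une tresse}, which together with $k^T=-k^N$ gives
\begin{equation}
\chi(\Sigma_n^\epsilon)=k^T+s=s-k^N .
\end{equation}
Second, the fact that $\Sigma_n^\epsilon$ is a smoothly embedded surface in $B(p,\epsilon)\cong\mathbb{B}^4$ with no closed components and boundary the braid $L$, hence a competitor in the definition of $\chi_s$, so that
\begin{equation}
\chi(\Sigma_n^\epsilon)\leq \chi_s(L).
\end{equation}
Third, Rudolph's slice Bennequin inequality (Theorem \ref{slice bennequin}) applied to the $s$-strand braid $L$ of algebraic length $e(L)=k^N$,
\begin{equation}
\chi_s(L)\leq s-e(L)=s-k^N .
\end{equation}
Concatenating these yields $s-k^N=\chi(\Sigma_n^\epsilon)\leq\chi_s(L)\leq s-k^N$, forcing every inequality to be an equality; in particular $\chi(\Sigma_n^\epsilon)=\chi_s(L)$, which is the claim.

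The genuinely delicate point, and the one I would treat most carefully, is the verification that $\Sigma_n^\epsilon$ is a legitimate competitor for $\chi_s(L)$. Three things must be checked, all ultimately resting on Assumption \ref{basic assumption}: that for $n$ large and $\epsilon$ small $\Sigma_n^\epsilon$ is properly embedded in the ball, that its boundary is a link isotopic to the braid $L$, and that it has no closed components. These follow from the Hausdorff plus uniformly smooth convergence away from $p$, connectedness, and the bounded genus, but they must be stated explicitly since $\chi_s$ is defined only for surfaces without closed components. I expect the subtlest issue to be the presence of transverse double points: if $\Sigma_n$ has $d$ such points in $B(p,\epsilon)$, then $\Sigma_n^\epsilon$ is not embedded and one must resolve each double point by an embedded tube, lowering the Euler characteristic by $2d$; at the same time the normal fallout formula (\ref{kn}) shows that $e(L)$ and $k^N$ then differ by $2d$. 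Tracking these two corrections with the correct signs shows that the clean equality $\chi(\Sigma_n^\epsilon)=\chi_s(L)$ is precisely the statement for embedded $\Sigma_n$ near $p$ (i.e. $d=0$), and that in the immersed case it acquires a $+2d$ correction. I would therefore state the proposition for the embedded regime and record the double-point bookkeeping as a remark, so that the homological and curvature input is entirely supplied by Theorems \ref{quand le bord est une tresse} and \ref{slice bennequin}.
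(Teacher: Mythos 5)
Your proof is correct and is essentially the paper's own argument: the paper's one-line proof is exactly your chain, namely Rudolph's slice Bennequin inequality (Theorem \ref{slice bennequin}) combined with the identity $\chi(\Sigma_n^\epsilon)=n(L)-e(L)$ coming from (\ref{rudolph applique au k}) under the equality hypothesis, together with the observation that $\Sigma_n^\epsilon$ is itself a competitor for $\chi_s(L)$. Your two extra pieces of bookkeeping --- the orientation-reversal reduction to $k^N\geq 0$, and the observation that transverse double points force the corrected identity $\chi_s(L)=\chi(\Sigma_n^\epsilon)-2d$, so the stated equality really concerns the regime where $\Sigma_n^\epsilon$ is embedded near $p$ --- are points the paper's proof passes over silently, so they are a sharpening of, not a departure from, its argument.
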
  
\begin{proof}
	We apply Rudolph's theorem: $\chi_s(L)\leq n(L)-e(L)=
	\chi(\Sigma_n^\epsilon)\leq \chi_s(L)$
\end{proof}
EXEMPLE. In [S-V1], [S-V2], we studied branch points of the form
\begin{equation}\label{disque}
z\mapsto (Re(z^N)+o(|z|^N),Im(z^N)+o(|z|^N), Re(z^p)+o(|z|^q), Im((e^{i\alpha}z^q)+o(|z|^q) )
\end{equation}
for $N>q,p$ with $(N,p)=(N,q)=(p,q)=1$ and $\alpha$ a real number. They are bounded by {\it ribbon knots}; such a knot $K$ verifies $\chi_s(K)=1$. Thus, if $(\Sigma_n)$ converges to a $\Sigma_0$ of the form (\ref{disque}) and $-k^T=|k^N|$, the $\Sigma_n$'s have to be disks.
\subsection{Equality case for minimal surfaces}
\subsubsection{Preliminaries: quasipositive braids and surfaces}

\begin{defn}
	Let $\sigma_1,...,\sigma_{n-1}$ be generators of the braid group $B_n$, for some integer $n$. A $n$-braid $\beta$ is quasipositive if it can be written as
	\begin{equation}
	\beta=\prod_{k=1}^p\gamma_k\sigma_{i_k}
	\gamma_k^{-1}
	\end{equation}
	where the $\gamma_k$'s are $n$-braids.\\
	A link which can be represented by a quasi-positive braid is called a quasipositive link. 
\end{defn}

By definition, a {\it quasipositive surface} $F$ in a bidisk $\mathbb{D}_1\times\mathbb{D}_2$ has a  projection  $p_1:(z,w)\mapsto z$  which is a {\it simple} branched covering with no branch points on the boundary and preserves the orientation, except possibly at the branch points. The other projection $p_2:(z,w)\mapsto w$ preserves the orientation in a neighbourhood of the branch points of $p_1$. The link $L$ which bounds a quasipositive surface $\Sigma$  is quasipositive and moreover
\begin{equation}
\chi(\Sigma)=\chi_s(L)
\end{equation}
{\it Via} a diffeomorphism which smoothes the corners of the bidisk, this definition extends to surfaces in $\mathbb{B}^4$. Boileau-Orevkov ([B-O])  proved a very interesting result which we now state in a more restricted context. \\
Let $J_0$ be the complex structure on $\mathbb{C}^2$ and let  $\omega$ be the K\"ahler form $\mathbb{B}^4$ defined by $\omega(X,Y)=<X,J_0Y>$ ($<,>$ denotes the scalar product). A surface in $\mathbb{B}^4$ is {\it symplectic} if $\omega|_F>0$.\\
On $\mathbb{S}^3=\partial\mathbb{B}^4$, we define the contact form  $\xi$: 
$$\mbox{if\ }p\in\mathbb{S}^3,\ X\in T_p \mathbb{S}^3\ \ \xi(p)=<X,J_0p>$$
A loop $L$ in $\mathbb{S}^3$ is {\it ascending} w.r.t. the contact structure if $\xi|_L>0$. 
\begin{thm}\label{bo}([B-O])
	Let $(F,\partial F)\subset (\mathbb{B}^4,\mathbb{S}^3)$ be a smooth, oriented, properly embedded symplectic surface; assume that $\partial F$ is ascending. Then $F$ is a quasipositive surface. 
\end{thm}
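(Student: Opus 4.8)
The plan is to show that, after a symplectic isotopy fixing the boundary, the first projection $p_1:(z,w)\mapsto z$ exhibits $F$ as a simple positive branched covering of the disk, which is exactly the definition of a quasipositive surface recalled above. The whole difficulty is concentrated in the word \emph{positive}: I must control both the sheets of the covering away from the branch points and the local model at the branch points, using the symplectic hypothesis on $F$ and the ascending hypothesis on $\partial F$.

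First I would put everything in standard form: by Darboux, identify a neighbourhood of $F$ inside $(\mathbb{B}^4,\omega)$ with a piece of $(\mathbb{C}^2,\omega_0)$ with $\omega_0(X,Y)=\langle X,J_0Y\rangle$, so that the fibres $p_1^{-1}(z_0)$ are complex lines, hence $\omega_0$-symplectic and $J_0$-holomorphic. The key soft input is positivity of intersection: since each tangent plane $T_qF$ is $\omega_0$-positive and each fibre is a complex (hence $\omega_0$-positive) plane, at every point where $F$ is transverse to a fibre the differential of $p_1|_F$ is orientation preserving. Thus, wherever $p_1|_F$ is a local diffeomorphism it is automatically a \emph{positive} local diffeomorphism, and the covering can only fail to be positive along the tangency locus.

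The hard analytic step is to make $p_1|_F$ a genuine branched covering with nondegenerate positive branch points. The natural route is to deform $F$, rel $\partial F$, to a $J$-holomorphic curve for an almost complex structure $J$ that is tamed by $\omega$, agrees with $J_0$ near $\mathbb{S}^3$, makes $F$ pseudoholomorphic, and for which the fibres $p_1^{-1}(z_0)$ remain $J$-holomorphic. The delicate point is that these requirements on $J$ compete along the locus where $F$ is tangent to the fibration; arranging a single such $J$ (or, failing that, braiding $F$ directly by a symplectic isotopy near these tangencies) is the technical heart of the argument. Granting it, the intersections of the fibres with $F$ are governed by the local intersection theory of pseudoholomorphic curves (McDuff, Micallef-White): they are isolated and positive, and after a generic perturbation of the fibration the tangencies become simple, so $p_1|_F$ is a simple branched covering whose branch points are positive and at which $p_2$ preserves orientation. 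This is where the genuinely nontrivial work lies, and it is the reason the statement is not purely formal.

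Finally I would match this interior picture to the boundary. The ascending condition $\xi|_{\partial F}>0$ says that $\partial F$ is positively transverse to the tight contact structure on $\mathbb{S}^3$, which in the chosen coordinates forces the angular coordinate along $\partial F$ to be strictly increasing, i.e. $\partial F$ is a closed braid with respect to the axis $p_1^{-1}(0)$ and carries no branch point on the boundary. Combining this with the branched-covering structure on the interior gives precisely a braided surface with only positive bands, so by the definition recalled above $F$ is a quasipositive surface (and $\partial F$ a quasipositive link). The main obstacle, as indicated, is the holomorphic braiding step: one must guarantee that the deformation can be performed rel boundary and that compactness together with positivity of intersections upgrades the tangency locus to a finite set of simple positive branch points.
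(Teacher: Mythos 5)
You cannot be compared against the paper here, because the paper contains no proof of this statement: Theorem~\ref{bo} is quoted, in restricted form, from Boileau--Orevkov [B-O] (``Boileau-Orevkov ([B-O]) proved a very interesting result which we now state in a more restricted context''), and is used as a black box in the proof of Proposition~\ref{egalite si minimal}. So the only question is whether your sketch stands on its own, and it does not: the step you yourself call ``the technical heart of the argument'' (making $p_1|_F$ a positive simple branched covering via a pseudoholomorphic deformation rel boundary) is introduced with ``Granting it,'' and never carried out. What you have written is a plan for a proof, not a proof.

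Moreover, that step as you formulate it would fail. You ask for a single tamed almost complex structure $J$, standard near $\mathbb{S}^3$, which makes $F$ (after isotopy rel $\partial F$) $J$-holomorphic \emph{and} keeps the straight fibres $p_1^{-1}(z_0)$ $J$-holomorphic. But the hypothesis that $F$ is symplectic does not prevent $F$ from meeting some fibre with a negative intersection point: two symplectic planes in $\mathbb{R}^4$ can intersect negatively. At such a point no tamed $J$ can render both $F$ and the fibre $J$-holomorphic, by the very positivity of intersections you want to invoke; and removing these negative intersections by an isotopy rel boundary is essentially equivalent to the braiding statement you are trying to prove, so the argument is circular at its core. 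The mechanism that actually works (and is the content of [B-O]) is to give up the straight fibration: one fixes a tamed $J$ making the embedded symplectic surface $F$ holomorphic (always possible), and then uses the Gromov--Eliashberg theory of fillings of $\mathbb{S}^3$ by $J$-holomorphic discs to produce a new foliation of $\mathbb{B}^4$ by $J$-holomorphic discs replacing the fibres $p_1^{-1}(z_0)$. Positivity of intersections of $F$ with the discs of \emph{that} family, together with the ascending hypothesis on $\partial F$, yields the braided structure with positive branch points and hence quasipositivity in Rudolph's sense. Without this filling input, or a genuine substitute for it, your sketch does not close.
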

\subsubsection{The result}

\begin{prop}\label{egalite si minimal}
	Suppose that $\Sigma_n$'s, $\Sigma_0$, $M$ verify Assumption \ref{basic assumption}. Assume moreover that the $\Sigma_n$'s are minimal and that $k^T+k^N=0$.
	\begin{enumerate}
		\item 
		There exists a parallel complex structure $J_0$ on $T_pM$ such that every tangent plane to $\Sigma_0$ at $p$ is a $J_0$-complex line. 
		\item 
		There exists a symplectic structure $\omega_0$ in a neighbourhood of $p$ such that, for $n$ large enough and $\epsilon$ small enough, the $\Sigma_n^\epsilon$'s are $\omega_0$-symplectic. 
		\item 
		For $n$ large enough, $\epsilon$ small enough, the links $\partial\Sigma_n^\epsilon$'s are quasi-positive and 
		\begin{equation}
		\chi(\Sigma_n^\epsilon)=\chi_s(\partial\Sigma_n^\epsilon)
		\end{equation}
	\end{enumerate}
	
\end{prop}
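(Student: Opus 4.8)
The plan is to reduce everything to the classical theory of complex and symplectic curves in three stages: first show that $k^T+k^N=0$ forces the tangent cone of $\Sigma_0$ at $p$ to be complex for a single parallel complex structure $J_0$ (this is (1)); then manufacture from $J_0$ a symplectic form $\omega_0$ taming the nearby $\Sigma_n^\epsilon$ (this is (2)); and finally feed the resulting symplectic surfaces with ascending boundary into the Boileau--Orevkov theorem (Theorem \ref{bo}) to extract quasipositivity and the sharp Euler characteristic equality (this is (3)).

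For (1), I would read the homology class of the bubble. By Theorem \ref{theorem: current in the minimal case} the limit of the lifts is a cusp curve $\tilde\Sigma_0^\epsilon\cup S$ with $S$ a complex curve cohomologous to the current $C$ of Theorem \ref{theorem: current}; since $k^T+k^N=0$, the formula $-[C]=(k^T+k^N)s_++(k^T-k^N)s_-$ shows that $[S]$ is a multiple of $s_-$, so its $s_+$-component vanishes. In the fibre $G_2^+(T_pM)\cong Z_+(T_pM)\times Z_-(T_pM)$ (cf. (\ref{identification de la grassmannienne})) the projection $\pi_+$ to the $Z_+$-factor is holomorphic for the Eells--Salamon structure (Corollary \ref{eells-salamon sur la grassmannienne}), and $\deg(\pi_+|_S)=[S]\cdot s_-=0$, so $\pi_+$ is constant on each connected component of $S$. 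Now $S$ is glued to the lifted sheets $\tilde{\mathcal D}_i^\epsilon$ only at the nodes lying over $p$, namely the points whose $Z_+$-coordinate is the self-dual part $a_i$ of the tangent plane $P_i$; since the $\mathcal D_i$ are otherwise disjoint, the connectedness of $\tilde\Sigma_0^\epsilon\cup S$ (Theorem \ref{theorem: current in the minimal case}) means $S$ joins all these nodes, and as $\pi_+$ is constant along any path inside $S$ we obtain $a_1=\cdots=a_m=:J_0$. A plane has self-dual part $J_0$ exactly when it is a $J_0$-complex line, so every $P_i$ is $J_0$-complex; I then extend $J_0$ by parallel transport.

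For (2), set $\omega_0(X,Y)=\langle X,J_0Y\rangle$, the parallel K\"ahler form of $J_0$. A plane $P$ satisfies $\omega_0|_P>0$ precisely when its $Z_+$-coordinate lies in the open hemisphere of $Z_+(T_pM)$ centered at $J_0$. Since the tangent planes of $\Sigma_0^\epsilon$ near $p$ tend to the $J_0$-complex planes $P_i$ as $\epsilon\to0$, and $\pi_+(S)=\{J_0\}$, the projection $\pi_+(\tilde\Sigma_0^\epsilon\cup S)$ is contained, for $\epsilon$ small, in an arbitrarily small neighbourhood of $J_0$ inside that hemisphere; by the Hausdorff convergence in Theorem \ref{theorem: current in the minimal case}, $\pi_+(\tilde\Sigma_n^\epsilon)$ lies in the same hemisphere for $n$ large, whence $\omega_0|_{\Sigma_n^\epsilon}>0$. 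For (3), I would choose linear coordinates on $T_pM$ in which $J_0$ is standard and the metric Euclidean, so that $\mathbb B(p,\epsilon)$ becomes the standard $\mathbb B^4$ with its K\"ahler form $\omega_0$. The boundary $\partial\Sigma_0^\epsilon$ is a braid about the $J_0$-axis by Theorem \ref{quand le bord est une tresse}(iii), hence ascending for the contact structure $\xi$ on $\mathbb S^3$; being ascending is an open $C^1$-condition, so $\partial\Sigma_n^\epsilon$ is ascending for $n$ large. Then $\Sigma_n^\epsilon$ is a properly embedded symplectic surface with ascending boundary, so Theorem \ref{bo} shows it is quasipositive, and the definition of a quasipositive surface yields both the quasipositivity of $\partial\Sigma_n^\epsilon$ and the equality $\chi(\Sigma_n^\epsilon)=\chi_s(\partial\Sigma_n^\epsilon)$.

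The main obstacle is (1): converting the purely homological vanishing of the $s_+$-component of the bubble into the geometric statement that the finitely many tangent planes share one self-dual direction. The delicate point is that the sheets $\tilde{\mathcal D}_i^\epsilon$ themselves have nonconstant $Z_+$-projection, so connectedness must be exploited carefully, using that the bubble attaches to the principal component only at the branch-point nodes in order to force $\pi_+$ to be globally constant along $S$. A secondary technical point is the ascending condition in (3): one must verify that the $C^1$-convergence of $\partial\Sigma_n^\epsilon$ to the braid $\partial\Sigma_0^\epsilon$ is strong enough, uniformly on the sphere $\mathbb S(p,\epsilon)$, to preserve transversality to $\xi$, and that the embeddedness hypothesis of Theorem \ref{bo} is met, since the $\Sigma_n$ may a priori carry transverse double points that the symplectic positivity near $p$ must be shown to exclude.
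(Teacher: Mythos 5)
Your proposal is correct and follows essentially the same route as the paper's own proof: part (1) via the homological triviality of the $Z_+$-part of the bubble from Theorems \ref{theorem: current} and \ref{theorem: current in the minimal case} plus connectedness of the limit, part (2) via the K\"ahler form of $J_0$ and Hausdorff convergence of the twistor lifts into a hemisphere around $J_0$, and part (3) via the ascending-boundary hypothesis of Boileau--Orevkov (Theorem \ref{bo}). The only deviations are cosmetic: you phrase (1) in $G_2^+=Z_+\times Z_-$ through the degree of the holomorphic projection $\pi_+|_S$ where the paper works directly in $Z_+$ and observes that $S$ is a finite set of points, and in (3) you detour through the braid criterion of Theorem \ref{quand le bord est une tresse} where the paper argues directly that tangent planes near the boundary are close to $J_0$-complex.
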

\begin{proof}  The pseudo-holomorphic curve $S$ described in Theorem \ref{theorem: current in the minimal case} is closed and included in the projective line $(Z_+)_p$; it has zero homology so it consists in a finite number of points. On the other hand, the $\tilde{\Sigma}_n^\epsilon$'s are connected so $\tilde{\Sigma}_0^\epsilon\cup S$, being their Hausdorff limit, is also connected. Since $\tilde{\Sigma}_0^\epsilon$ is closed, it follows that the limit of the $\tilde{\Sigma}_n^\epsilon$'s.\\ 
The set $\tilde{\Sigma}_0^\epsilon$ is the union of the lifts of the branched disks making up $\Sigma_0^\epsilon$ and the intersection $\tilde{\Sigma}_0^\epsilon\cap (Z_+)_p$ is the set of the complex structures on the planes tangent at $p$ to the different disks making up $\Sigma_0$. \\
Suppose that there are two different complex structures, $J_0$ and $J_1$ in\\ $\Sigma_0^\epsilon\cap (Z_+)_p$. There are sequences of points $p_n$ and $q_n$ in $\Sigma_n$, both converging to $p$ such that the lifts $J(p_n)$ and $J(q_n)$ in $Z_+(M)$ converge respectively to $J_0$ and $J_1$. For $n$ large enough, there is a path $\gamma_n$  between $J(p_n)$ and $J(q_n)$ in $\tilde{\Sigma}_n$; the $\gamma_n$'s converges to a path in $\tilde{\Sigma}_0^\epsilon\cap (Z_+)_p$ between $J_0$ and $J_1$; but this latter space is finite, a contradiction. This proves 1.\\
\\	
We denote by $\langle, \rangle_0$ the scalar product on $T_pM$ and we let\begin{equation}
	\omega_0(X,Y)=\langle J_0X, Y\rangle_0
	\end{equation}
	We take a trivialization of $Z_+$ around $p$. The curve $\tilde{\Sigma}_0^\epsilon$ is in a small neighbourhood of $J_0$ and so is $\tilde{\Sigma}_n^\epsilon$ since it Hausdorff converges to $\tilde{\Sigma}_0^\epsilon$. \\
	If $q\in\Sigma_n^\epsilon$ and $(\epsilon_1,\epsilon_2)$ is a positive orthonormal basis of $T_q\Sigma_n^\epsilon$, 
	\begin{equation}
	\langle J_n(q)\epsilon_1,\epsilon_2\rangle=1
	\end{equation}
	where $J_n(q)$ is the lift of $q$ in $\tilde{\Sigma}_n^\epsilon$. Since $J_n(q)$ is close to $J_0$, if $n$ is large enough and $\epsilon$ is small enough, 	$\omega_0(\epsilon_1,\epsilon_2)>0$. This proves 2.\\
	Since the complex structure on the tangent planes to the $\partial\Sigma_n^\epsilon$ are close to $J_0$, near the boundary, $\partial\Sigma_n^\epsilon$ is ascending w.r.t. the contact form defined by $J_0$ and the theorem follows from Theorem \ref{bo}.
\end{proof}
\section{Exemples with $-k^T>|k^N|$}\label{exemples et contre-exemples}
\subsection{Embedded surfaces}
Consider the complex curve $\Gamma_\epsilon$ defined near $(0,0)$ in $\mathbb{C}^2$ by $z_1^3-z_2^2=\epsilon$ which converges to the cusp parametrized by $(z^2,z^3)$. It verifies $k^T=-k^N=-3$. For every $\epsilon$ glue to $\Gamma_\epsilon$ a little handle closer and closer to the origin. This will not change $k^N$ but $k^T$ will become $-5$. 
\subsection{Immersed minimal disks}
 We recall (see for example [S-V3]) that a minimal disk in $\mathbb{R}^4$  is given locally by a map from the disk $\mathbb{D}$ 
$$F:\mathbb{D}\longrightarrow\mathbb{R}^4\cong\mathbb{C}^2$$
\begin{equation}
F:z\mapsto \Big( f_1(z)+\overline{f_2}(z), f_3(z)+\overline{f_4}(z)\Big)
\end{equation}
 where $f_1,...f_4$ are holomorphic functions verifying

\begin{equation}\label{equation:four functions}
f'_1f'_2+f'_3f'_4=0
\end{equation}
After identifying $\mathbb{S}^2$ to $\mathbb{C}P^1$ and taking a stereographic projection, the Gauss maps $\gamma_\pm:\mathbb{D}\longrightarrow Z_\pm$ are 
\begin{equation}\label{applications de Gauss}
\gamma_+=\frac{f'_3}{f'_2}\ \ \ \ \ \ \ \gamma_-=-\frac{f'_4}{f'_2}
\end{equation}
\begin{equation}\label{definition des f'}
\mbox{We pick}\ \ \ \ \ \ \ \ \ \ \ \ f'_1=z^2, f'_2=z^5, f'_3=z^3, f'_4=-z^4 
\end{equation}
and derive a minimal map 
\begin{equation}\label{definition de l'application minimale}
z\mapsto (\frac{1}{3}z^3+\frac{1}{6}\bar{z}^6,\frac{1}{4}z^4-\frac{1}{5}\bar{z}^5)
\end{equation}
Note that the knot of the branch point is a $(3,4)$ torus knot. \\
We define a sequence of minimal immersions converging to (\ref{definition de l'application minimale}) by
\begin{equation}\label{disque convergent}
h^{(n)'}_1=(z-\frac{1}{n})(z+\frac{1}{n}), h^{(n)'}_2=z^5, 
 h^{(n)'}_3=z^2(z-\frac{1}{n}), h^{(n)'}_4=-z^3(z+\frac{1}{n})
\end{equation}
Using (\ref{applications de Gauss}), we see that the Gauss maps of the minimal surfaces defined by (\ref{disque convergent}) are $$\gamma^{(n)}_+=\frac{1}{z^3}(z-\frac{1}{n}) \ \ \ \ \ \ \ \ \gamma^{(n)}_-=\frac{1}{z^2}(z+\frac{1}{n})$$
We notice that
$$\gamma^{(n)}_+(\frac{1}{n})=0 \ \ \ \ \gamma^{(n)}_+(0)=\infty \ \ \ \ \ \ \gamma^{(n)}_-(-\frac{1}{n})=0 \ \ \ \ \gamma^{(n)}_-(0)=\infty$$
So the limiting currents in $Z_+$ and $Z_-$ each contain at least two points; we derive from the proof of Proposition \ref{egalite si minimal} 1. that $k^N+k^T$ and $k^N-k^T$ are both non zero.

\footnotesize{Marina Ville, Univ. Paris Est Creteil, CNRS, LAMA, F-94010 Creteil, France, \\
	villemarina@yahoo.fr}
	\end{document}